\newtheorem{theorem}{Theorem}
\newtheorem{lemma}[theorem]{Lemma}
\newtheorem{proposition}[theorem]{Proposition}
\newtheorem{remark}{Remark}
\DeclareMathOperator\erf{erf}
\newcommand{\R}{\mathbb{R}}
\newcommand{\N}{\mathbb{N}}
\newcommand{\eps}{\epsilon}
\newcommand{\un}[1]{\underline{#1}}
\title{Small-time global null controllability of generalized Burgers' equations}
\author{Rémi Robin\footnote{remi.robin@inria.fr}}
\affil{Laboratoire Jacques-Louis Lions, Sorbonne Université, Paris, France}
\begin{document}
\maketitle
\begin{abstract}
    In this paper, we study the small-time global null controllability of the generalized Burgers' equations $y_t + \gamma |y|^{\gamma-1}y_x-y_{xx}=u(t)$ on the segment $[0,1]$. The scalar control $u(t)$ is uniform in space and plays a role similar to the pressure in higher dimension. We set a right Dirichlet boundary condition $y(t,1)=0$, and allow a left boundary control $y(t,0)=v(t)$. Under the assumption $\gamma>3/2$ we prove that the system is small-time global null controllable. Our proof relies on the return method and a careful analysis of the shape and dissipation of a boundary layer.
\end{abstract}
\tableofcontents

\section{Introduction}
\subsection{Description of the system}
For a given $T>0$, we are concerned with the following generalized Burgers' equations on the segment $[0,1]$:
\begin{equation}
    \label{eq:gen_burgers}
    \tag{$E_\gamma$}
    \left\{
        \begin{array}{ll}
            y_t+\gamma |y|^{\gamma-1} y_x-y_{xx}=u(t) \quad & \text{on }(0,T)\times (0,1), \\
            y(t,0)=v(t)& \text{on }(0,T),\\
            y(t,1)=0& \text{on }(0,T),\\
            y(0,x)=y_0(x)&\text{on }(0,1) ,
        \end{array}
    \right.
\end{equation}
and 
\begin{equation}
    \label{eq:gen_burgers_sign}
    \tag{$F_\gamma$}
    \left\{
        \begin{array}{ll}
            y_t+\gamma \operatorname{sign}(y)|y|^{\gamma-1} y_x-y_{xx}=u(t) \quad & \text{on }(0,T)\times (0,1), \\
            y(t,0)=v(t)& \text{on }(0,T),\\
            y(t,1)=0& \text{on }(0,T),\\
            y(0,x)=y_0(x)&\text{on }(0,1) ,
        \end{array}
    \right.
\end{equation}
where $u(t)$ is an interior control which does not depend on space, and $v(t)$ is a boundary control. We are interested in the small-time global null controllability. That is, for any initial (possibly large) datum $y_0$  and any (possibly small) final time $T$, can we find some controls $u$ and $v$ such that the solution of \cref{eq:gen_burgers} or \cref{eq:gen_burgers_sign} is steered to $0$ in time $T$?

\subsection{Motivation and existing results}

The main motivation for studying systems \eqref{eq:gen_burgers} and \eqref{eq:gen_burgers_sign} comes from controllability questions in fluid mechanics, in both dimensions 2 and 3. Let us briefly recall some major results obtained during the last thirty years. Coron and Glass have respectively established the two-dimensional \cite{coronControlabiliteExacteFrontiere1993} and three-dimensional \cite{glassControlabiliteExacteFrontiere1997} controllability of the Euler system. As for the Navier--Stokes equation, the two-dimensional case in a manifold without boundary has been tackled by Coron and Fursikov in \cite{coronGlobalExactControllability1996}. Following the later result, Fursikov and Imanuvilov proved a global exact controllability result in three dimensions in \cite{fursikovExactControllabilityNavierStokes1999} with a control acting on the whole boundary.

 More recently, Coron, Marbach and Sueur proved in \cite{coronSmalltimeGlobalExact2020} the small-time global null controllability relying on a control acting on part of the boundary and a Navier slip-with-friction condition elsewhere. This result was extended to the case of smooth solutions by Liao, Sueur and Zhang in \cite{liaoSmoothControllabilityNavier2022}.The same problem with Dirichlet boundary condition, stated by Lions in \cite{lionsExactControllabilityDistributed1991}, remains open and is considered to be a major challenge in the field.
 Most of the difficulties come from the interaction of the inertial and viscous forces near the uncontrolled boundary, which creates a difficult-to-control boundary layer. That question has motivated the study of several models with simpler geometry in two and three dimensions, for example \cite{chapoulyGlobalNullControllability2009}, \cite{guerreroRemarksGlobalApproximate2006}, \cite{guerreroResultConcerningGlobal2012} and \cite{liaoGlobalControllabilityNavier2022}.

In this work, we consider the generalized Burgers' equations \eqref{eq:gen_burgers} and \eqref{eq:gen_burgers_sign}. Those equations are a family of one-dimensional non-linear evolution partial differential equations, whose solutions exhibit a boundary layer behaviour near a Dirichlet boundary condition.
From an historical point of view, the classical viscous Burgers' equation ($F_2$) has been introduced and studied by Burgers in \cite{burgersMathematicalModelIllustrating1948}. Burgers' equation naturally appears among other fields in plasma physics, fluid dynamics and trafic flow. The generalized Burgers' equations are a generalisation of the classical Burgers' equation remaining in the class of viscous conservation law. A general study of these equations as well as the physical motivations behind them can be found for example in \cite{murrayPerturbationEffectsDecay1970,murrayGunnEffectOther1970,sachdevGeneralizedBurgersEquations1987,sunMetastabilityGeneralizedBurgers1999,escobedoAsymptoticBehaviourSourcetype1993}.

Let us now recall some major results concerning the controllability of Burgers' equation. Fursikov and Imanuvilov in \cite{fursikovControllabilityEvolutionEquations1996} proved the small-time local exact controllability in the vicinity of the trajectories. Their result relies on Carleman estimates, and only uses one boundary control. Its extension to generalized Burgers' equations is straightforward (see \cref{subsec:local_exact}).

Global controllability in finite time toward steady states with boundary controls was established in \cite{fursikovControllabilityCertainSystems1995}. Some generalisations are available: let us cite for example the work of Léautaud who found an extension for a wide class of viscous conservation laws in \cite{leautaudUniformControllabilityScalar2012} and the work of Ara\'ujo et al. on Burgers-$\alpha$ systems \cite{araujoUniformControllabilityFamily2021}. Results concerning the small-time global stabilization with an additional internal control were obtained recently in \cite{coronSmalltimeGlobalStabilization2021}.

An obstruction to the small-time global null controllability with boundary controls was found in \cite{guerreroRemarksGlobalControllability2007}, while an obstruction to small-time local null controllability with only a uniform in space internal control has been proved in \cite{fredericmarbachObstructionSmallTime2018}. Hence, the use of the internal control $u(t)$ together with a boundary control is necessary in \cref{eq:gen_burgers,eq:gen_burgers_sign}. With the help of this new control and two boundary controls, Chapouly proved the small-time global null controllability using results on the inviscid Burgers equation in \cite{chapoulyGlobalControllabilityNonviscous2009}.

Afterwards, Marbach extended the proof of the small-time global null controllability without the right boundary control, i.e. in our setting described by Eq. ($F_2$). A key feature of his proof is the Cole-Hopf transform (\cite{coleQuasilinearParabolicEquation1951,hopfPartialDifferentialEquation1950}). This transformation reduces Burgers' equation to the heat equation thanks to a change of variable.

In this paper, we generalize Marbach's work to the generalized Burgers' equations. To the best of our knowledge, those equations do not have an equivalent transformation. As a consequence, deriving precise estimates on the boundary layer is a more delicate task. We hope that this work will help to prepare the way for the many challenging problems remaining in control of fluid mechanics equations. In this scope, some related open problems are listed in \cref{sec:ccl_and_open}.

\subsection{Rigorous statement of our main result}
We have to provide a reasonable definition for the solutions of \cref{eq:gen_burgers}. Let us fix the final time $T\in \R^+$ and consider the following generalisation:
\begin{equation}
    \label{eq:gen_burgers generalized}
    \tag{$G_{\gamma}$}
    \left\{
    \begin{array}{ll}
        y_t+\gamma |y|^{\gamma-1} y_x-y_{xx}=u(t) \quad & \text{on }(0,T)\times (0,1), \\
        y(t,0)=v(t)& \text{on }(0,T), \\
        y(t,1)=w(t)& \text{on }(0,T),  \\
        y(0,x)=y_0(x)&\text{on }(0,1), 
    \end{array}
    \right.
\end{equation}
where the scalar controls are $u\in L^{\infty}(0,T)$ and $v,w\in H^{1/4}(0,T)\cap L^\infty(0,T)$. We also assume $\gamma \geq 1$ and $y_0 \in L^\infty(0,1)$, then there exists a unique solution of \eqref{eq:gen_burgers generalized} in $\mathscr{C}^0([0,T];L^2)\cap L^2(0,T; H^1)\cap L^\infty((0,T)\times (0,1))$. Note that $H^{1/4}$-regularity on time for the boundary controls is a natural assumption, as it corresponds to the minimum regularity that ensures well posedness of the usual 1D heat equation in $\mathscr{C}^0([0,T];L^2)\cap L^2(0,T; H^1)$ \cite[chapter 4]{MR0350178}. We do not provide the proof of the well-posedness here, it is based on \textit{a priori} energy estimates and a fixed point argument (see \cite{lionsQuelquesMethodesResolution1969}). For the interested reader, we mention that well-posedness in less regular space is studied in \cite{bekiranovInitialvalueProblemGeneralized1996,liWellposednessGeneralizedBurgers2019}. A similar existence statement can be stated for \cref{eq:gen_burgers_sign} with 3 controls.

Let us now state our main contribution.
\begin{theorem}
    \label{th:main}
    Suppose $\gamma>3/2$, $y_0\in L^\infty(0,1)$ and $T>0$. Then, there exist $u\in L^\infty(0,T)$ and $v\in H^{1/4}(0,T)\cap L^\infty(0,T)$ steering the solution $y$ of \eqref{eq:gen_burgers} to the null state in time $T$.

    If moreover $\gamma\geq 2$, then there exist $u\in L^\infty(0,T)$ and $v\in H^{1/4}(0,T)\cap L^\infty(0,T)$ steering the solution $y$ of \eqref{eq:gen_burgers_sign} to the null state in time $T$.
\end{theorem}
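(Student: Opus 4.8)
The plan is to use the return method, which is the standard approach for small-time global null controllability when a linearization around zero is insufficient. The abstract explicitly announces this strategy, so I would organize the proof around three distinct phases acting on a short time interval $[0,T]$, which can be taken as small as we wish.

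\medskip

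\noindent\textbf{Phase 1 (convective flushing via the inviscid profile).} First I would exploit the hyperbolic transport structure of the equation. By choosing the interior control $u(t)$ to drive the state strongly in one direction and using the left boundary control $v(t)$, one forces a large advection velocity $\gamma|y|^{\gamma-1}$ so that the characteristics carry the mass of $y_0$ out through the controlled boundary $x=0$ in a time much shorter than $T$. The viscous term $-y_{xx}$ is a lower-order perturbation on this short inviscid timescale (this is the inviscid-limit heuristic underlying Chapouly's and Marbach's arguments, adapted here to the nonlinearity $\gamma|y|^{\gamma-1}y_x$). The outcome of this phase is to bring the state close, in a suitable norm, to a reference trajectory that is small away from the uncontrolled boundary $x=1$ but that leaves a boundary layer concentrated near $x=1$, where the Dirichlet condition $y(t,1)=0$ clashes with the nonzero interior value.

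\medskip

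\noindent\textbf{Phase 2 (dissipation of the boundary layer).} The heart of the matter, and the step I expect to be the main obstacle, is controlling this boundary layer near $x=1$. Unlike the classical case $\gamma=2$, there is no Cole--Hopf transformation linearizing the equation, so one cannot reduce the layer analysis to the heat equation. Instead I would construct an explicit (or self-similar) boundary-layer profile, carry out a careful asymptotic expansion matching the inner layer to the outer inviscid solution, and derive sharp decay estimates showing that the layer dissipates. The condition $\gamma>3/2$ (respectively $\gamma\ge 2$ for the signed equation $F_\gamma$) should enter precisely here: the exponent governs the balance between the convective flux $\gamma|y|^{\gamma-1}y_x$ and the diffusion $y_{xx}$ inside the layer, and the threshold is exactly what makes the layer thin enough and its dissipation fast enough to be absorbed in the allotted small time. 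Getting the quantitative dissipation rate with the correct dependence on $\gamma$, and handling the absolute value $|y|^{\gamma-1}$ (nondifferentiable at $y=0$, hence the need for the two separate regimes in the statement), is the delicate technical core.

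\medskip

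\noindent\textbf{Phase 3 (local exact controllability to zero).} After Phases 1 and 2, the state is small in $L^\infty$, hence in particular in a neighbourhood of the zero trajectory. I would then invoke the small-time local exact controllability in the vicinity of trajectories, which holds for the generalized Burgers' equations as noted in the excerpt (\cref{subsec:local_exact}, following Fursikov--Imanuvilov): with the single boundary control $v$ one can steer the already-small state exactly to $0$ in the remaining time, while keeping $u$ suitably chosen. Concatenating the three phases and checking the regularity of the controls ($u\in L^\infty$, $v\in H^{1/4}\cap L^\infty$) together with the continuity of the concatenated trajectory in $\mathscr{C}^0([0,T];L^2)\cap L^2(0,T;H^1)$ then yields the theorem; the final bookkeeping is routine once the boundary-layer estimates of Phase 2 are in hand.
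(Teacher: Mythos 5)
Your skeleton (return method, hyperbolic flushing, boundary-layer dissipation, local exact controllability) matches the paper's architecture, and Phase 3 is essentially the paper's parabolic stage. But the proposal defers the entire decisive step to Phase 2 and commits there to a mechanism that does not match what actually works. You propose to build a self-similar inner profile and do matched asymptotics; the paper explicitly notes that for general $\gamma$ there is no closed form for the relevant profile (no Cole--Hopf analogue, no explicit $\vartheta$), and it does not perform any asymptotic matching. What it does instead is: (i) steer the state \emph{up} to the steady state $\vartheta$ of \cref{def:k_M} with $v=\theta\gg1$ — note the transport then goes from the controlled boundary $x=0$ toward $x=1$, so nothing is ``carried out through $x=0$'' as you write; the direction is reversed and the mass piles up in a layer at the uncontrolled end; (ii) use the pressure control $u=-\theta/T'$ to subtract $\theta$, leaving a residue $\vartheta-\theta$ supported in a layer of width $O(\theta^{-\alpha})$ for any $\alpha<\gamma-1$ (\cref{lem:boundary_layer}); (iii) in a passive stage with \emph{no control}, dissipate this residue. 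The quantitative heart of (iii) is the moment identity obtained by multiplying by $(x-1)$, which gives
\begin{align*}
\|\,\underline{y}\,\|^{\gamma}_{L^{\gamma}((0,T/2)\times(0,1))}\leq \tfrac{\tilde C^{2}}{2}\,\theta^{\,1-2\alpha}+\eta,
\end{align*}
followed by an $L^{1}\to L^{\infty}$ smoothing estimate for viscous conservation laws (Carlen--Loss for $\gamma\ge2$, a non-explicit variant otherwise). The threshold $\gamma>3/2$ is exactly the condition allowing $\alpha>1/2$ and $\alpha<\gamma-1$ simultaneously, so that $\theta^{1-2\alpha}\to0$; your heuristic about the convection--diffusion balance in the layer points in the right direction but never produces this exponent, and without it the statement is not proved.

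Two further inaccuracies: the restriction $\gamma\ge2$ for \eqref{eq:gen_burgers_sign} is not due to the nondifferentiability of $|y|^{\gamma-1}$ at $y=0$ (that issue is handled in the fixed-point stage via H\"older continuity of $z\mapsto\gamma|z|^{\gamma-1}$ for all $\gamma>1$); it comes from a sign obstruction in the weighted moment estimate \eqref{eq:estimate_weighted} for the signed flux, which for $\gamma\ge2$ can be bypassed because the Carlen--Loss smoothing applies directly. Also, the paper's hyperbolic phase is genuinely two-staged (reach $\vartheta$, then translate down by $\theta$), and collapsing it into a single ``flushing'' step obscures why the residue is localized near $x=1$ with a quantifiable width — which is precisely the input the passive stage needs.
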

We provide only the proof for \cref{eq:gen_burgers}. Indeed, the adaptation of our proof for \cref{eq:gen_burgers_sign} is straightforward except for \cref{sec:passive_stage}. In that section, our proof does not extend to $3/2< \gamma \leq 2$ (there is a sign issue in \cref{eq:estimate_weighted}). The case ($F_2$) follows from \cite{marbachSmallTimeGlobal2014}.
\begin{remark}
    We are not able to tackle the case $\gamma \in (1,3/2]$ for \cref{eq:gen_burgers}, and we believe that our method cannot be used to tackle this (entire) range of $\gamma$. We comment this point in \cref{sec:ccl_and_open}.
\end{remark}

The sketch of the proof is the following:
to reach the null state in arbitrary small-time, we take advantage of the return method introduced by Coron in \cite{coronGlobalAsymptoticStabilization1992} (see also \cite{coronControlNonlinearity2009}),
and more specifically, the three-stages strategy developed by Marbach in \cite{marbachSmallTimeGlobal2014}.
\begin{itemize}
    \item Hyperbolic stage, first part: we introduce the steady state $\vartheta$ of \eqref{eq:gen_burgers} with $u=0$ and $v=\theta \gg 1$:
    \begin{align}
        \label{def:k_M}
        \left\{
        \begin{array}{l}
            \vartheta_{xx}=(\vartheta^\gamma)_x, \\
            \vartheta(0)=\theta, \quad
            \vartheta(1)=0 .
        \end{array}
        \right.
    \end{align}
    Note that $\vartheta$ exhibits a boundary layer near the right endpoint. Using the hyperbolic nature of the equation when it is governed by the non-linear term, we prove that we can stear the system to a neighborhood of $\vartheta$ in small-time.
    \item Hyperbolic stage, second part: we use the pressure-like term to drive our system to a neighborhood of the null state up to a boundary residue around $x=1$.
    \item Passive stage: we do not apply any control and wait for the dissipation of the boundary residue in small-time. The assumption $\gamma >3/2$ is crucial for this stage.
    \item Parabolic stage: we provide a local exact controllability result around zero using a fixed point method.
\end{itemize}
The proof of \cref{th:main} is then obtained by the combination of these stages resumed in \cref{prop:main_first_stage,lem:neighborhood_of_zero,prop:main_stage2,lem:local_exact}.
\subsection{Preliminaries}
\subsubsection{Comparison principle}
Let us recall the comparison principle for semi-linear parabolic equations. Let us suppose that $y$ (resp. $\tilde y$) is a solution of \eqref{eq:gen_burgers generalized} with controls $u,v,w$ (resp. $\tilde{u},\tilde{v},\tilde{w}$) and initial condition $y_0$ (resp. $\tilde y_0$), such that a.e.,
\begin{align*}
    \begin{array}{rlrl}
    u&\leq \tilde u,&\qquad 
    v&\leq \tilde v, \\
     w&\leq \tilde w,&
    y_0&\leq \tilde y_0.
    \end{array}
\end{align*}
Then,
\begin{align*}
    y\leq \tilde y.
\end{align*}
We refer to \cite{pucciMaximumPrinciple2007}. One can also easily extend the proof given by Marbach in \cite{marbachSmallTimeGlobal2014}.
\subsubsection{Study of the steady states}
It is crucial in our proof to have a reasonable description of the steady states $\vartheta$ defined by \cref{def:k_M}.
\begin{lemma}
    Let $\theta>0$, then \cref{def:k_M} admits a unique solution. Besides,
    \begin{align}
        \label{eq:k_M_with_C}
        &\vartheta_x=|\vartheta|^\gamma+\vartheta_x(1) \quad \text{ with } -\theta^\gamma-\theta < \vartheta_x(1) < -\theta^\gamma
    \end{align}
    and $\theta \mapsto \vartheta_x(1)$ is decreasing.
\end{lemma}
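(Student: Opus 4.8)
The plan is to reduce the second-order boundary value problem \cref{def:k_M} to a first-order autonomous ODE by a single integration, and then to solve the resulting scalar problem by a shooting argument made explicit through separation of variables. First I would integrate the equation $\vartheta_{xx}=(|\vartheta|^{\gamma-1}\vartheta)_x$ once and use $\vartheta(1)=0$ to obtain $\vartheta_x=|\vartheta|^{\gamma-1}\vartheta+\vartheta_x(1)$, so that $C:=\vartheta_x(1)$ is the only free constant. To remove the absolute values and control the range of $\vartheta$, I would apply the maximum principle: since any solution solves the linear second-order equation $\vartheta_{xx}-\gamma|\vartheta|^{\gamma-1}\vartheta_x=0$ with bounded drift coefficient and no zeroth-order term, its extrema are attained at the endpoints, whence $0\le\vartheta\le\theta$ on $[0,1]$ and in fact $0<\vartheta<\theta$ on $(0,1)$ by the strong maximum principle. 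On this range $|\vartheta|^{\gamma-1}\vartheta=\vartheta^\gamma$, which yields exactly the integrated form $\vartheta_x=\vartheta^\gamma+C$ appearing in \cref{eq:k_M_with_C}.

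Next I would show that $\vartheta$ is strictly decreasing: a point where $\vartheta_x=0$ would force $\vartheta=(-C)^{1/\gamma}$, an equilibrium of the autonomous equation $\vartheta_x=\vartheta^\gamma+C$, so by uniqueness for the initial value problem $\vartheta$ would be constant, contradicting the boundary data. In particular $\vartheta_x(0)<0$, i.e. $\theta^\gamma+C<0$, which gives the upper bound $C<-\theta^\gamma$. For existence and uniqueness I would set $a=-C>\theta^\gamma$ and separate variables in $\vartheta_x=\vartheta^\gamma-a$: the solution issued from $\vartheta(1)=0$ and integrated toward decreasing $x$ reaches the value $\theta$ after an $x$-length
\[
L(a)=\int_0^\theta\frac{\mathrm{d}s}{a-s^\gamma},
\]
so the boundary condition $\vartheta(0)=\theta$ is equivalent to $L(a)=1$. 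The map $L$ is continuous and strictly decreasing on $(\theta^\gamma,+\infty)$, with $L(a)\to+\infty$ as $a\downarrow\theta^\gamma$ (the integrand develops a logarithmic singularity at $s=\theta$ when $a=\theta^\gamma$) and $L(a)\le\theta/(a-\theta^\gamma)\to0$ as $a\to+\infty$; hence there is exactly one $a$ solving $L(a)=1$. This determines $C=-a$ uniquely and, through the initial value problem (well-posed since $s\mapsto s^\gamma$ is locally Lipschitz for $\gamma\ge1$), the unique solution $\vartheta$; conversely any solution must have this same $C$, so the BVP solution is unique.

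Finally, the lower bound follows from the identity obtained by integrating $\vartheta_x=\vartheta^\gamma+C$ over $(0,1)$: since $\int_0^1\vartheta_x\,\mathrm{d}x=\vartheta(1)-\vartheta(0)=-\theta$, one gets $C=-\theta-\int_0^1\vartheta^\gamma\,\mathrm{d}x$, and $0<\int_0^1\vartheta^\gamma\,\mathrm{d}x<\theta^\gamma$ because $0<\vartheta<\theta$ on $(0,1)$, giving $-\theta^\gamma-\theta<C$. The monotonicity of $\theta\mapsto\vartheta_x(1)=-a$ follows from the defining relation $L(a)=1$: differentiating $\int_0^\theta(a(\theta)-s^\gamma)^{-1}\,\mathrm{d}s=1$ in $\theta$ yields $a'(\theta)\int_0^\theta(a-s^\gamma)^{-2}\,\mathrm{d}s=(a-\theta^\gamma)^{-1}>0$, so $a$ is strictly increasing and $C=-a$ is strictly decreasing. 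I expect the analysis of $L$ near $a=\theta^\gamma$ — establishing the blow-up that guarantees $L(a)=1$ is solvable — to be the main point requiring care, while the sign, monotonicity, and comparison steps are routine phase-line arguments.
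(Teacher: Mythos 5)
Your proof is correct. It shares with the paper the key reduction --- integrate \cref{def:k_M} once to obtain the first-order autonomous equation $\vartheta_x=\vartheta^\gamma+C$ with $C=\vartheta_x(1)$, then shoot --- but the execution is genuinely different. The paper shoots forward from $x=0$: it fixes $y(0)=\theta$, lets $C$ range over $[-\theta^\gamma-\theta,-\theta^\gamma]$, observes that $\mathcal{F}(\theta,-\theta^\gamma)$ is the constant $\theta$ while $\mathcal{F}(\theta,-\theta^\gamma-\theta)(1)<0$ (by comparison with $\theta(1-x)$), and concludes by the intermediate value theorem together with the monotonicity of $C\mapsto\mathcal{F}(\theta,C)(1)$; monotonicity in $\theta$ of the same map gives the last claim. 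You instead shoot backward from $x=1$ and separate variables, turning the condition $\vartheta(0)=\theta$ into the explicit scalar equation $L(a)=\int_0^\theta(a-s^\gamma)^{-1}\,ds=1$ with $a=-C$. This buys a few things: existence and uniqueness follow from the elementary limits and strict monotonicity of $L$; the monotonicity of $\theta\mapsto\vartheta_x(1)$ drops out of implicit differentiation of $L(a(\theta))=1$ (legitimate since $\partial_a L<0$); and the lower bound $-\theta^\gamma-\theta<C$ comes from the clean identity $C=-\theta-\int_0^1\vartheta^\gamma\,dx$ rather than from a comparison function. Your $L$ is moreover exactly the integral the paper re-derives in the proof of \cref{lem:boundary_layer}, so that lemma becomes a near-immediate by-product of your construction. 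The only points needing a word of care are ones you already address: the preliminary maximum-principle step pinning $0\le\vartheta\le\theta$ (which legitimizes replacing $|\vartheta|^{\gamma-1}\vartheta$ by $\vartheta^\gamma$ and must come first), and ruling out $C\ge0$ before identifying a critical point of $\vartheta$ with the equilibrium value $(-C)^{1/\gamma}$ --- the latter is immediate since $C\ge0$ would make $\vartheta$ nondecreasing, contradicting $\vartheta(0)=\theta>0=\vartheta(1)$.
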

\begin{proof}
    To prove the existence, let us consider the application $\mathcal{F}$ given as follows: for any $(\theta,C)\in \R^2$, we associate the (local) solution $y$ of 
    \begin{align}
        \label{eq:tmp_steady_state}
        y(0)=\theta, &\quad  y_x=|y|^\gamma+C.
    \end{align}

    First, note that for any $\theta>0$, $\mathcal{F}(\theta,-\theta^\gamma)$ is a constant function. Besides, for any $\theta>0$, the function $\mathcal{F}(\theta,-\theta^\gamma-\theta)$ is strictly decreasing and bounded by below by $-(\theta^\gamma+\theta)^{1/\gamma}$. Let us define
    $$x^*= \inf \{x\geq 0 \mid \mathcal{F}(\theta,-\theta^\gamma-\theta)(x^*)\geq 0\}.$$
    One easily gets that for $x\in [0,x^*]$, $\mathcal{F}(\theta,-\theta^\gamma-\theta)(x)< \theta(1-x)$. Hence, $\mathcal{F}(\theta,-\theta^\gamma-\theta)(1)<0$.
    As a consequence $0$ belongs to the range of the continuous function $[-\theta^\gamma-\theta,-\theta^\gamma]\ni C \mapsto \mathcal{F}(\theta,C)(1)$. Thus, we can find $C^*\in [-\theta^\gamma-\theta,-\theta^\gamma]$ such that $\mathcal{F}(\theta,C^*)(1)=0$ and $x\mapsto \mathcal{F}(\theta,C^*)(x)$ is a solution of \cref{def:k_M} and \cref{eq:k_M_with_C} with $C^*=\mathcal{F}(\theta,C^*)_x(1)$. This concludes the proof of existence.

    Note also that  the function $C \mapsto \mathcal{F}(\theta,C)(1)$ is increasing. Thus, we proved the uniqueness of the solution of \cref{def:k_M}.

    Moreover, if one fix $C\in \R $ and consider $\theta_1<\theta_2$, $\mathcal{F}(\theta_1,C)<\mathcal{F}(\theta_2,C)$. Hence, $\theta \mapsto \vartheta_x(1)$ is decreasing.
\end{proof}
It is also easy to check that the steady state $\vartheta$ is a non-negative, decreasing and concave function.

\begin{remark}
    \label{rmk:boundary_layer_exp}
    \leavevmode
    \begin{itemize}
        \item In the case of the usual Burgers' equation,
        \begin{align}
            \vartheta(x)=\hat \theta \tanh(\hat \theta(1-x)),
        \end{align}
        where $\hat \theta$ is the unique solution of $\hat \theta \tanh(\hat \theta)=\theta$.
        \item For the linear case, i.e. with $\gamma=1$ (which is not included in the assumption of Theorem \ref{th:main}), we have
        \begin{align}
            \vartheta(x)=\frac{(e-e^x)}{e-1}\theta.
        \end{align}
    \end{itemize}
\end{remark}
For a general $\gamma$, we are not aware of an explicit expression for $\vartheta$. If $\gamma>1$, $\vartheta$ presents a boundary layer at the right endpoint (see \cref{fig:steady_states}) which is characterized in the next lemma.

\begin{figure}
    \includegraphics[width=\textwidth]{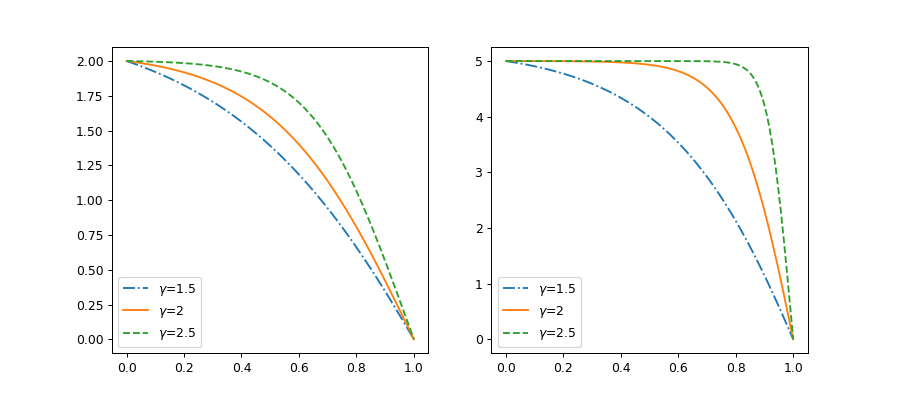}
    \caption{Steady states $\vartheta$ with $\theta=2$ and $\theta=5$ for different values of $\gamma$.}
    \label{fig:steady_states}
\end{figure}

\begin{lemma}
    \label{lem:boundary_layer}
    Let $\gamma>1$ and $\vartheta$ the solution of \cref{def:k_M}. Then $\vartheta$ exhibits a boundary layer of size $\frac{1}{\theta^{\gamma-1}}$ around $x=1$. More precisely, the following estimates hold:
    
    \begin{enumerate}
        \item Let $a \in (0,1)$, then for all $ \theta>0$,
        \begin{align}
            \vartheta(x)\geq a \theta, \quad  \forall x \in [0,1- \frac{a}{1-a^\gamma}\theta^{1-\gamma}].
        \end{align}
        \item Let $\eps>0$ and $\alpha< \gamma-1$, then there exists $C_{\eps,\alpha} >0$ such that
        \begin{align}
             \vartheta(x)\geq \theta-\eps , \quad  \forall x \in [0,1- C_{\eps,\alpha}\theta^{-\alpha}].
        \end{align}
    \end{enumerate}
\end{lemma}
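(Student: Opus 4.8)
The engine of both estimates is the first integral \eqref{eq:k_M_with_C}. Since $\vartheta\ge 0$ we may drop the absolute value and write $\vartheta_x=\vartheta^\gamma+C$ with $C:=\vartheta_x(1)<-\theta^\gamma$. As $\vartheta$ takes values in $[0,\theta]$ we have $\vartheta^\gamma\le\theta^\gamma<-C$, so $\vartheta_x<0$ everywhere and $\vartheta$ is a strictly decreasing bijection from $[0,1]$ onto $[0,\theta]$. The plan is to use $\vartheta$ itself as the integration variable: for any level $\ell\in(0,\theta)$, if $x_\ell$ denotes the unique point with $\vartheta(x_\ell)=\ell$, then the change of variables $v=\vartheta(x)$ gives the exact formula
\begin{align}
1-x_\ell=\int_0^{\ell}\frac{dv}{-C-v^\gamma},
\end{align}
in which the integrand is positive. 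Both items then reduce to bounding this integral from above, since it measures the width of the region lying to the right of the level set $\{\vartheta=\ell\}$.

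For item (1), take $\ell=a\theta$. On $[0,a\theta]$ we bound the denominator below using $-C>\theta^\gamma$ and $v^\gamma\le(a\theta)^\gamma$, namely $-C-v^\gamma>\theta^\gamma-(a\theta)^\gamma=\theta^\gamma(1-a^\gamma)$, whence
\begin{align}
1-x_{a\theta}<\frac{a\theta}{\theta^\gamma(1-a^\gamma)}=\frac{a}{1-a^\gamma}\,\theta^{1-\gamma}.
\end{align}
Thus $x_{a\theta}>1-\frac{a}{1-a^\gamma}\theta^{1-\gamma}$, and since $\vartheta$ is decreasing, $\vartheta(x)\ge a\theta$ on the whole interval $[0,1-\frac{a}{1-a^\gamma}\theta^{1-\gamma}]$ (for small $\theta$ this interval is empty and the claim is vacuous). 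This already establishes that the layer has width $O(\theta^{1-\gamma})$.

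For item (2) we may assume $\theta>\eps$, since otherwise $\theta-\eps\le0\le\vartheta$ and the statement is trivial. Take $\ell=\theta-\eps$. Using again $-C>\theta^\gamma$ and then rescaling $v=\theta s$,
\begin{align}
1-x_{\theta-\eps}\le\int_0^{\theta-\eps}\frac{dv}{\theta^\gamma-v^\gamma}=\theta^{1-\gamma}\int_0^{1-\eps/\theta}\frac{ds}{1-s^\gamma}.
\end{align}
The remaining integral diverges only logarithmically at $s=1$; since $1-s^\gamma\ge1-s$ for $\gamma\ge1$ and $s\in[0,1]$, it is bounded by $\int_0^{1-\eps/\theta}\frac{ds}{1-s}=\ln(\theta/\eps)$. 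Writing $\beta:=\gamma-1-\alpha>0$, we get $1-x_{\theta-\eps}\le\theta^{-\alpha}\bigl(\theta^{-\beta}\ln(\theta/\eps)\bigr)$, and an elementary computation shows $\sup_{\theta>\eps}\theta^{-\beta}\ln(\theta/\eps)=\tfrac{\eps^{-\beta}}{e\beta}=:C_{\eps,\alpha}<\infty$. Hence $1-x_{\theta-\eps}\le C_{\eps,\alpha}\theta^{-\alpha}$, and monotonicity of $\vartheta$ yields $\vartheta(x)\ge\theta-\eps$ on $[0,1-C_{\eps,\alpha}\theta^{-\alpha}]$.

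The one genuinely delicate point is item (2): one must notice that the profile spends a logarithmically large (in $\theta$) amount of ``$s$-length'' within $\eps/\theta$ of the top value, so the naive width $\theta^{1-\gamma}$ is inflated by a factor $\ln(\theta/\eps)$. The content of the statement is precisely that this logarithm is harmless — it is absorbed by any power $\theta^{-(\gamma-1-\alpha)}$ with $\alpha<\gamma-1$ — and the clean inequality $1-s^\gamma\ge1-s$ lets us bypass analysing the exact behaviour of $1-s^\gamma$ near $s=1$. Everything else is a direct integration of the separable ODE.
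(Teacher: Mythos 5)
Your proof is correct and follows essentially the same route as the paper: both rewrite the first integral $\vartheta_x=\vartheta^\gamma+\vartheta_x(1)$ as the level-set width formula $1-x^*\le\int_0^{\vartheta(x^*)}\frac{dz}{\theta^\gamma-z^\gamma}$ and then bound the integrand, using $\theta^\gamma(1-a^\gamma)$ for item (1) and the comparison $1-s^\gamma\ge 1-s$ plus absorption of the logarithm for item (2). Your only additions — the explicit constant $C_{\eps,\alpha}=\eps^{-\beta}/(e\beta)$ and the explicit treatment of the regime $\theta\le\eps$ — are minor refinements of the same argument.
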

\begin{proof}
    For $x\in (0,1]$, according to \cref{eq:k_M_with_C},
    \begin{align*}
    \vartheta_x(x)=\vartheta(x)^\gamma+\vartheta_x(1) \leq \vartheta(x)^\gamma- \theta^\gamma < 0.
    \end{align*}
    Thus, for any $x^*\in (0,1)$,
    \begin{align*}
        \int_{x^*}^1 \frac{-\vartheta_x(x) dx}{\theta^\gamma-\vartheta^\gamma(x)}\geq 1-x^*.
    \end{align*}
    By the change of variable formula with $z=\vartheta(x)$,
    \begin{align}
        \label{eq:proof_boundary_layer}
        \int_{0}^{\vartheta(x^*)} \frac{dz}{\theta^\gamma-z^\gamma}\geq 1-x^*.
    \end{align}
    To prove the first point, let $x^*\in (0,1)$ be the unique solution to the equation $\vartheta(x^*)=a \theta $. Injecting into Equation \eqref{eq:proof_boundary_layer}, we get
    \begin{align*}
        a\theta \frac{1}{\theta^\gamma(1-a^\gamma)} \geq \int_{0}^{\vartheta(x^*)} \frac{dz}{\theta^\gamma-z^\gamma} \geq 1-x^*.
    \end{align*}
    Thus,
    \begin{align*}
        x^* \geq 1- \frac{a}{1-a^\gamma}\theta^{1-\gamma}.
    \end{align*}

    For the second point, let $x^*\in (0,1)$ be the unique solution to the equation $\vartheta(x^*)=\theta- \eps$. Let us use \eqref{eq:proof_boundary_layer} again, and apply the change of variable $s=\frac{z}{\theta}$,

    \begin{align*}
    \int_{0}^{\vartheta(x^*)} \frac{dz}{\theta^\gamma-z^\gamma}=\theta^{1-\gamma}\int_0^{1-\eps/\theta} \frac{ds}{1-s^{\gamma}}.
    \end{align*}
    For $\gamma >1$, we have $\frac{1}{1-s} > \frac{1}{1-s^{\gamma}}$.
    Thus,
    \begin{align*}
        -\theta^{1- \gamma} \ln(\frac{\eps}{\theta})  > 1-x^*.
    \end{align*}
    Using $-\theta^{1-\gamma} \ln(\frac{\eps}{\theta})=o(\theta^{-\alpha})$, for a given couple $(\alpha,\eps)$, there exists $C_{\eps,\alpha}$ such that 
    \begin{align*}
        x^* \geq 1-C_{\eps,\alpha}\theta^{-\alpha} .
    \end{align*}
\end{proof}

\section{Hyperbolic stage, first part: toward a neighborhood of a steady state}

\subsection{The control strategy}
The goal of this section is to prove the following proposition.
\begin{proposition}
    \label{prop:main_first_stage}
    For a given $y_0\in L^\infty(0,1)$, $\eta>0$ and $T>0$, one can find $\theta_0>0$ such that the following holds: for any $\theta \geq \theta_0$, there exist $u,v\in L^\infty(0,T)\times (L^\infty(0,T)\cap H^{1/4}(0,T))$ such that the solution $y$ of \eqref{eq:gen_burgers} satisfies 
    \begin{align}
        \label{eq:lower_bound_step1}
        \vartheta(x) \leq y(T,x),
    \end{align}
    and
    \begin{align}
        \label{eq:upper_bound_step1}
        y(T,x) \leq \theta + \eta.
    \end{align}
\end{proposition}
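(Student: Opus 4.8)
The plan is to obtain both inequalities through the comparison principle with spatially and temporally constant controls, trading the smallness of $T$ for the largeness of $\theta$. Set $M_0:=\|y_0\|_{L^\infty}$, require $\theta_0\geq M_0$, and choose the controls $u\equiv 0$ and $v\equiv\bar\theta$ with $\bar\theta:=\theta+\eta/2$; write $\vartheta$ and $\bar\vartheta$ for the steady states of \cref{def:k_M} at levels $\theta$ and $\bar\theta$. The upper bound is immediate: the constant $\bar\theta$ is a stationary solution with $u=0$ whose boundary traces dominate $v=\bar\theta$ and $w=0$ and whose value dominates $y_0$, so the comparison principle yields $y\leq\bar\theta=\theta+\eta/2\leq\theta+\eta$, which is \cref{eq:upper_bound_step1}. (One could equally keep $v\equiv\theta$ and use a small constant $u\leq\eta/T$ together with the affine supersolution $\theta+ut$.)

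For the lower bound \cref{eq:lower_bound_step1} I first reduce to a universal worst case. Let $\underline{y}$ solve \eqref{eq:gen_burgers generalized} with the same controls $(0,\bar\theta,0)$ but with the constant datum $\underline{y}_0\equiv-M_0\leq y_0$; comparison gives $\underline{y}\leq y$, so it suffices to prove $\underline{y}(T,\cdot)\geq\vartheta$. Two monotonicities describe $\underline{y}$. Comparing with the constant solution $-M_0$ shows $\underline{y}\geq-M_0=\underline{y}(0,\cdot)$, and since the problem is autonomous, comparing $\underline{y}(\cdot+s,\cdot)$ with $\underline{y}(\cdot,\cdot)$ makes $t\mapsto\underline{y}(t,x)$ nondecreasing; comparing $\underline{y}$ with $\bar\vartheta$ (whose datum $\bar\vartheta\geq-M_0$) gives $\underline{y}\leq\bar\vartheta$. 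Hence $\underline{y}(t,\cdot)\uparrow\bar\vartheta$ by uniqueness of the steady state, and since $\bar\theta>\theta$ forces $\bar\vartheta>\vartheta$ on $[0,1)$, the task becomes to show that the increasing family has already overtaken the strictly smaller profile $\vartheta$ by time $T$.

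This is where small time becomes a large parameter. The rescaling $\underline{y}(t,x)=\theta\,Y(\theta^{\gamma-1}t,x)$ turns the equation into
\[
Y_\tau+\gamma|Y|^{\gamma-1}Y_x-\theta^{1-\gamma}Y_{xx}=0,\qquad \tau\in[0,\theta^{\gamma-1}T],
\]
i.e. a vanishing viscosity $\theta^{1-\gamma}$ on an effective horizon $\theta^{\gamma-1}T\to\infty$. At the inviscid level the left datum invades the domain as a shock of unit speed, filling $[0,1]$ in effective time $\simeq 1\ll\theta^{\gamma-1}T$; thus for $\theta$ large $\underline{y}(T,\cdot)$ is arbitrarily close to $\bar\vartheta$ in the bulk, and there it exceeds $\vartheta$ because $\bar\theta-\theta=\eta/2>0$. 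Near $x=1$ the comparison reduces to the boundary flux: as $\bar\vartheta(1)=\vartheta(1)=0$ one needs $|\underline{y}_x(T,1)|\geq|\vartheta_x(1)|$, and here the strict gap $|\bar\vartheta_x(1)|-|\vartheta_x(1)|\simeq\gamma\theta^{\gamma-1}\eta/2$ coming from \cref{eq:k_M_with_C} leaves ample room once $\underline{y}_x(T,1)$ is close to $\bar\vartheta_x(1)$.

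The main obstacle, I expect, is exactly this quantitative and spatially uniform convergence of $\underline{y}(T,\cdot)$ to $\bar\vartheta$: strong enough to defeat the fixed small time through the large parameter, yet valid all the way into the Dirichlet boundary layer, whose width $\theta^{1-\gamma}$ shrinks as $\theta$ grows. A soft relaxation or vanishing-viscosity argument controls the bulk easily, but making the lower bound hold pointwise up to $x=1$, equivalently controlling the boundary derivative $\underline{y}_x(T,1)$, requires reconciling the hyperbolic filling of the interior with the genuinely viscous, $\theta$-dependent layer at the endpoint, using the shape estimates of \cref{lem:boundary_layer}.
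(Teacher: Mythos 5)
Your upper bound is correct and in fact cleaner than the paper's: by never pushing the profile above $\theta+\eta/2$ you avoid the overshoot that forces the paper into a weighted energy estimate with the weight $e^{\frac{\gamma}{2}\theta^{\gamma-1}(1-x)}$ to dissipate an excess of size $3\|y_0\|_{L^\infty}$. But that overshoot is the price the paper pays for its lower bound, and it is precisely your lower bound that has a genuine gap. You set $u\equiv 0$, whereas the interior control is the essential tool of the paper's argument: it applies a large pulse $u=(\theta+2\|y_0\|_{L^\infty})/T'$ on a short window $[0,T']$, with $T'$ chosen \emph{after} $\theta$. Since $u$ acts uniformly in space, it lifts the entire profile except near the Dirichlet endpoint $x=1$, producing (via the explicit heat kernel on a half-line) a comparison profile $(\theta+\|y_0\|_{L^\infty})\erf\bigl((1-x)/\sqrt{T'}\bigr)$ whose boundary layer has width $\sqrt{T'}$. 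Because $T'$ is free, this layer can be made strictly thinner than the layer of $\vartheta$, whose width $\sim\theta^{1-\gamma}$ is fixed once $\theta$ is; this is what makes the pointwise domination $\vartheta\leq \underline{y}^{\mathrm{lin}}(T',\cdot)$ provable all the way to $x=1$. An energy estimate then controls the nonlinear correction, and for $t\geq T'$ the bound persists because $\vartheta$ is a steady subsolution.

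With $u\equiv 0$ you lose this mechanism. The front invading from the left travels at speed $\sim\gamma\theta^{\gamma-1}$ and arrives at $x=1$ carrying a viscous layer whose width is set by the balance of transport and diffusion, i.e.\ again $\sim\theta^{1-\gamma}$ --- the \emph{same} order as the layer of $\vartheta$. So in the region where the inequality is delicate there is no small parameter left to exploit, and the soft statements you establish (monotone increase of $\underline{y}(t,\cdot)$ to $\bar\vartheta$, strict inequality $\bar\vartheta>\vartheta$ on $[0,1)$) do not yield $\underline{y}(T,\cdot)\geq\vartheta$ at the fixed time $T$: monotone convergence to a strictly larger limit gives domination only for $t$ large depending on $\theta$, and the required time could a priori grow with $\theta$. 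Your reduction of the layer comparison to $|\underline{y}_x(T,1)|\geq|\vartheta_x(1)|$ is also not sufficient by itself (two functions vanishing at $x=1$ with ordered derivatives there need not be ordered on a whole neighbourhood, let alone on $[0,1]$), and no quantitative rate for $\underline{y}_x(t,1)\to\bar\vartheta_x(1)$ is offered. You correctly identify this as the main obstacle, but it is not a technical loose end: it is the heart of the proposition, and resolving it is exactly what the paper's interior-control construction is for.
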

For the proof of \cref{prop:main_first_stage}, we use the following controls on $[0,T]$
\begin{align}
    \label{eq:def_control}
    u(t)= \left\{ 
        \begin{array}{ll}
        \frac{\theta+2\|y_0\|_{L^\infty}}{T'}& \text{ for }t\leq T' ,\\
        0& \text{ for }t> T',
    \end{array}
    \right.
\end{align}
and
\begin{align}
    \label{eq:def_control2}
    v(t)= \left\{ \begin{array}{ll}
        \frac{(\theta+\|y_0\|_{L^\infty})t}{T'}& \text{ for }t\leq T' ,\\
        \theta +\frac{\|y_0\|_{L^\infty}(\frac{T}{2}-t)}{\frac{T}{2}-T'}& \text{ for }T'<t\leq T/2 ,\\
        \theta& \text{ for }t> T/2 ,
    \end{array}
    \right.
\end{align}
where $T'<\frac{T}{2}$ will be chosen small enough later.
We denote $y$ the solution of \eqref{eq:gen_burgers} associated with these controls. We prove separately the lower bound \eqref{eq:lower_bound_step1} and the upper bound \eqref{eq:upper_bound_step1}.

\begin{remark}
    In what follows, we use $C$ for a constant which is independent of $\theta$ and may be different from one line to the next.
\end{remark}
\subsection{Lower bound}
The idea to handle the lower bound is to use a very small time $T'$. Let us introduce 
\begin{align}
    \un v(t)= \frac{(\theta +2\|y_0\|_{L^\infty})t}{T'}-\|y_0\|_{L^\infty}  \leq v(t)& \text{ for all }t\leq T' ,
\end{align}
and set
\begin{align}
    T'_0=\frac{T'\|y_0\|_{L^\infty}}{(\theta +2\|y_0\|_{L^\infty})}.
\end{align}
Leading to, $\forall t \leq  T'_0,\, \un v(t)\leq 0$.

We define the following subsolution $\un{y}$ of $y$ on $(0, T'_0)$:
\begin{equation}
    \label{eq:sub_sol1}
    \left\{
        \begin{array}{ll}
            \un{y}_t+\gamma |\un{y}|^{\gamma-1} y_x-\un{y}_{xx}=u(t)& \text{on }(0, T'_0)\times (0,1) , \\
            \un{y}(t,0)=\un v(t)& \text{on }(0, T'_0), \\
            \un{y}(t,1)=\un v(t) & \text{on }(0, T'_0), \\
            \un{y}(0,x)=-\|y_0\|_{L^\infty}& \text{on }(0,1) .
        \end{array}
        \right.
\end{equation}
We easily check that $\un{y}(t)=\un v(t)$ for all $t\leq  T'_0$, which means in particular that $\un{y}( T'_0)=0$.

Let us now study the solution $\un{y}^{\text{lin}}$ of the following heat equation on the semi-infinite space domain $(-\infty,1)$ with Dirichlet boundary condition on $x=1$:

\begin{equation}
    \label{eq:sub_sol_lin}
    \left\{
        \begin{array}{ll}
            \un{y}^{\text{lin}}_t-\un{y}^{\text{lin}}_{xx}=u(t) & \text{on }( T'_0,T')\times (-\infty,1)  ,\\
            \lim_{x\to -\infty}\un{y}^{\text{lin}}(t,x)=\un v(t) & \text{on }( T'_0,T') ,\\
            \un{y}^{\text{lin}}(t,0)=0 & \text{on }( T'_0,T') ,\\
            \un{y}^{\text{lin}}(T'_0,x)=0& \text{on }(-\infty,1)  .
        \end{array}
        \right.
\end{equation}
Using the usual representation formula, we introduce $\erf(z)=\frac{2}{\sqrt{\pi}}\int_0^z e^{-x^2}dx$, and compute
\begin{align}
    \un{y}^{\text{lin}}(t,x_1) &= \int_{T'_0}^{t} \frac{1}{\sqrt{4 \pi (t-s)}}\int_{-\infty}^1 (e^{\frac{-(x_1-x_2)^2}{4(t-s)}}-e^{\frac{-(x_1+x_2-2)^2}{4(t-s)}}) u(s) dx_2 ds \notag \\
    &= \int_{T'_0}^{t} u(s)  \erf (\frac{1-x_1}{\sqrt{t-s}} )ds.\notag \\
    &= \int_{T'_0}^{t} \frac{\theta+2\|y_0\|_{L^\infty}}{T'}  \erf (\frac{1-x_1}{\sqrt{t-s}} )ds.
\end{align}
Note that for $t\in [T'_0,T']$,
\begin{align}
    \un{y}^{\text{lin}}_x(t,x) \leq 0,\quad \text{and} \quad 0\leq \un{y}^{\text{lin}}(t,x) \leq \un v(t).
\end{align}
Besides, using that $\erf$ is an increasing function, 
\begin{align}
     (\theta+\|y_0\|_{L^\infty} )  \erf (\frac{1-x}{\sqrt{T'}} ) \leq \un{y}^{\text{lin}}(T',x).
\end{align}
As $\vartheta$ is a concave function, it lies below its tangent at $x=0$ and $x=1$:
\begin{align}
    \vartheta(x)\leq \min \left[ \theta, \vartheta_x(1) (x-1) \right], \qquad x\in[0,1].
\end{align}
Using that $\lim_{z\to \infty} \erf(z)=1$, one can prove that for $T'$ small enough, 
\begin{align}
    \min \left[ \theta, \vartheta_x(1) (x-1) \right] \leq (\theta+\|y_0\|_{L^\infty} )  \erf (\frac{1-x}{\sqrt{T'}} ), \qquad x\in[0,1].
\end{align}
Combining the last three inequalities, we have for $T'$ small enough that
\begin{align}
    \vartheta(x) \leq \un{y}^{\text{lin}}(T',x),& \quad \text{for all } x\in[0,1].
\end{align}
Going back to the non-linear equation, we extend $\un{y}$ to $(T'_0,T')$ by
\begin{equation}
        \label{eq:sub_sol2}
        \left\{
        \begin{array}{ll}
            \un{y}_t+(\un{y}^\gamma)_x-\un{y}_{xx}=u(t)& \text{on }(T'_0,T')\times (0,1)  , \\
            \un{y}(t,0)=\un{y}^{\text{lin}}(t,0)& \text{on }(T'_0,T') , \\
            \un{y}(t,1)=0 & \text{on }(T'_0,T') , \\
            \un{y}(T'_0,x)=0& \text{on }(0,1) .
        \end{array}
        \right.
\end{equation}
Note that $\un{y}$ is non-negative on $(T'_0,T')\times (0,1)$.
We consider $\delta= \un{y}-\un{y}^{\text{lin}}$, which is solution of
\begin{equation}
    \label{eq:delta_sub_sol}
        \left\{
        \begin{array}{ll}
            \delta_t-\delta_{xx}=-((\un{y}^{\text{lin}}+\delta)^\gamma)_x& \text{on }(T'_0,T')\times (0,1)  , \\
            \delta(t,0)=0& \text{on }(T'_0,T') , \\
            \delta(t,1)=0 & \text{on }(T'_0,T') , \\
            \delta(T'_0,x)=0& \text{on }(0,1) .
        \end{array}
        \right.
\end{equation}
Let us prove that $\delta \geq 0$ thanks to an energy estimate. We multiply \cref{eq:delta_sub_sol} by $w=\min(\delta,0)$, the negative part of $\delta$, and we integrate in space to get
\begin{align*}
    \frac{1}{2}\frac{d}{dt} \|w(t)\|^2_{L^2}+ \| w_x(t)\|^2_{L^2}
    &=-\int_0^1 w((\un{y}^{\text{lin}}+w)^\gamma)_x \\
    &=-\int_0^1 w ((\un{y}^{\text{lin}}+w )^\gamma-(\un{y}^{\text{lin}})^\gamma +(\un{y}^{\text{lin}})^\gamma )_x\\
    &=-\int_0^1 \gamma w \un{y}^{\text{lin}}_x (\un{y}^{\text{lin}})^{\gamma-1} + \int_0^1 w_x ((\un{y}^{\text{lin}}+w )^\gamma-(\un{y}^{\text{lin}})^\gamma).
\end{align*}
The first term of the last line is non-positive, hence using Cauchy-Schwarz and Young inequality, we obtain
\begin{align*}
    \frac{1}{2}\frac{d}{dt} \|w(t)\|^2_{L^2}+ \| w_x(t)\|^2_{L^2}&\leq \|w_x(t)\|_{L^2} \|(\un{y}^{\text{lin}}+w )^\gamma-(\un{y}^{\text{lin}})^\gamma\|_{L^2} \\
    & \leq \frac{1}{2} \|w_x(t)\|_{L^2}^2 + \frac{\gamma^2}{2} (\theta+\|y_0\|_{L^\infty})^{2(\gamma-1)} \|w(t)\|^2_{L^2}.
\end{align*}
By applying Gronwall inequality, we get
\begin{align}
    \|w(t)\|_{L^2}^2 \leq \|w(T'_0)\|_{L^2}^2 e^{\gamma^2 (\theta+\|y_0\|_{L^\infty})^{2(\gamma-1)} (t-T'_0)},
\end{align}
which implies $w=0$.

Hence, we proved that for $T'$ small enough, $y(T')\geq \vartheta$. Besides, $\vartheta$ is a subsolution for the controls defined in \cref{eq:def_control,eq:def_control2} on $(T',T)$. As a consequence, for all $t\geq T'$, $
y(t,x)\geq \vartheta(x)$, which concludes the proof of the lower bound \eqref{eq:lower_bound_step1} of \cref{prop:main_first_stage}.
\subsection{Upper bound}
We set
\begin{align}
\bar v(t)= \left\{ \begin{array}{lll}
    \|y_0\|_{L^\infty} + \frac{(\theta+2\|y_0\|_{L^\infty})t}{T'} &\geq v(t)& \text{ for }t\leq T' ,\\
    v(t) & & \text{ for }t> T' .
\end{array}
\right. 
\end{align}
Let us consider the supersolution $\bar{y}$ of $y$ on $(0,T)$\footnote{As $\bar{v}$ is not in $H^{1/4}(0,T)$, we formaly define $\bar{y}$ on both time intervals $(0,T')$ and $(T',T)$.}
\begin{equation}
    \label{eq:super_sol1}
    \left\{
        \begin{array}{ll}
            \bar{y}_t+(\bar{y}^\gamma)_x-\bar{y}_{xx}=u(t)& \text{on }(0,T)\times (0,1)  , \\
            \bar{y}(t,0)=\bar v(t)& \text{on }(0,T) , \\
            \bar{y}(t,1)=\bar v(t) & \text{on }(0,T) , \\
            \bar{y}(0,x)=\|y_0\|_\infty& \text{on }(0,1) .
        \end{array}
        \right.
\end{equation}
Note that $\forall t \in [0,T'],\, \bar{y}(t)=\bar v(t)$.
Using the comparison principle, we obtain
\begin{align}
    \label{eq:apriori_supersol_L_infty_bound}
    \theta \leq \bar{y}(t)\leq \theta +3 \|y_0\|_{L^\infty}& \text{ for all }t \geq T'.
\end{align}
Let us now denote by $\delta(t,x)$ the solution of
\begin{equation}
    \label{eq:non_linear_diff_new}
    \left\{
    \begin{array}{ll}
        \delta_t+ \gamma \theta^{\gamma-1} \delta_x-\delta_{xx}=-\big( (\theta+\delta)^\gamma -\theta^\gamma- \gamma \theta^{\gamma-1} \delta\big)_x & \text{on }(\frac{T}{2},T)\times (0,1)  , \\
        \delta(t,0)=0& \text{on }(\frac{T}{2},T) , \\ 
        \delta(t,1)=0& \text{on }(\frac{T}{2},T) , \\
        \delta(\frac{T}{2},x)=3 \|y_0\|_{L^\infty}& \text{on }(0,1) .
    \end{array}
    \right.
\end{equation}
Using the comparison principle, we observe that, for $t\in [\frac{T}{2},T]$, $\delta(t,x)\geq \bar y(t,x)-\theta$.
To study the evolution of $\delta$, we introduce the weight
\begin{align}
    \label{eq:def_A}
    A(x)=e^{\frac{\gamma}{2} \theta^{\gamma-1}(1-x)}.
\end{align}
Thus,
\begin{align}
     A_x=-\frac{\gamma}{2} \theta^{\gamma-1} A.
\end{align}
We multiply the first line of \eqref{eq:non_linear_diff_new} by $A \delta $ and we integrate on space. The terms of the left-hand side are
\begin{align}
    \int_0^1 \delta _t A \delta  dx&=\frac{1}{2}\frac{d}{dt} \|\delta \|^2_{L^2(Adx)},\\
    \int_0^1 \gamma \theta^{\gamma-1} \delta_x  A \delta dx &= \int_0^1 \left( \gamma \theta^{\gamma-1} \right)^2 \delta^2A, \\
    -\int_0^1 \delta _{xx} A\delta  dx&= \int_0^1 (\delta _x)^2 Adx - \int_0^1 \frac{\gamma}{2} \theta^{\gamma-1} \delta \delta_x A dx.
\end{align}
Hence, the left-hand side of Equation \eqref{eq:non_linear_diff_new} becomes
\begin{align}
    \label{eq:linear_enery_estimate}
    \frac{1}{2}\frac{d}{dt} \|\delta \|^2_{L^2(Adx)} +\frac{\gamma^2 \theta^{2(\gamma-1)}}{2} \|\delta \|^2_{L^2(Adx)} + \|\delta _x\|^2_{L^2(Adx)}.
\end{align}
We multiply the right-hand side of \cref{eq:non_linear_diff_new} by $A\delta$ and integrate in space to get
\begin{align}
    -\int_0^1 A \delta \big( (\theta+\delta)^\gamma - \gamma \theta^{\gamma-1} \delta\big)_x
    \label{eq:non_lin_with_ax}
    =\int_0^1 A_x \delta \big( (\theta+\delta)^\gamma -\theta^\gamma- \gamma \theta^{\gamma-1} \delta \big)\\
    +\int_0^1 A \delta_x \big( (\theta+\delta)^\gamma -\theta^\gamma- \gamma \theta^{\gamma-1} \delta \big).
\end{align}
As $z\mapsto z^\gamma$ is convex, $(\theta+\delta)^\gamma -\theta^\gamma- \gamma \theta^{\gamma-1} \delta \geq 0$. As a consequence, the right-hand side integral in line \eqref{eq:non_lin_with_ax} is non-positive.
Besides,

\begin{align}
    \label{eq:inequality_non_linear_right}
    \int_0^1 A \delta_x \big( (\theta+\delta)^\gamma - \gamma \theta^{\gamma-1} \delta \big) 
    & \leq \frac{1}{2}\|\delta_x\|^2_{L^2(Adx)} + \frac{1}{2}\| (\theta+\delta)^\gamma -\theta^\gamma- \gamma \theta^{\gamma-1} \delta\|^2_{L^2(Adx)}.
\end{align}
Let us estimate the second term of the right-hand side.
For $\gamma \geq 2$, we can use the fact that the second derivative of $z\mapsto z^\gamma$ is increasing to get
\begin{align}
    \label{eq:inequality_non_linear_right_big_gamma}
    (\theta+\delta)^\gamma -\theta^\gamma- \gamma \theta^{\gamma-1} \delta \leq \frac{\delta^2}{2} \gamma(\gamma-1)(\theta+3 \|y_0\|_{L^\infty} )^{\gamma-2}.
\end{align}
Whereas, for $\gamma\leq 2$, we get
\begin{align}
    \label{eq:inequality_non_linear_right_small_gamma}
    (\theta+\delta)^\gamma -\theta^\gamma- \gamma \theta^{\gamma-1} \delta \leq \frac{\delta^2}{2} \gamma(\gamma-1)\theta^{\gamma-2}.
\end{align}
Combining \cref{eq:linear_enery_estimate,eq:inequality_non_linear_right,eq:inequality_non_linear_right_big_gamma,eq:inequality_non_linear_right_small_gamma}, we obtain the weighted energy estimate

\begin{align}
    \label{eq:Lyapunov_upper}
    \frac{1}{2} \frac{d}{dt}\|\delta(t) \|^2_{L^2(Adx)}
    +\frac{1}{2} \|\delta_x \|_{L^2(Adx)}^2 + \frac{\gamma^2 \theta^{2(\gamma-1)}}{2} \|\delta(t) \|^2_{L^2(Adx)}
    \leq 
    \gamma^2 (\gamma -1)^2 \tilde \theta^{2(\gamma-2)} \| \delta(t)\|_{L^\infty}^2
     \|\delta(t) \|^2_{L^2(Adx)},
\end{align}
where 
\begin{align}
    \tilde \theta = 
    \left\{
    \begin{array}{lr}
        \theta  & \text{if } 3/2<\gamma < 2 , \\
        \theta + 3 \|y_0\|_{L^\infty}& \text{if } \gamma \geq 2 .
    \end{array}
    \right.
\end{align}
We obtain an $L^\infty$ norm estimate for $\delta$ with \cref{eq:apriori_supersol_L_infty_bound}
\begin{align}
    \| \delta(t)\|_{L^\infty}^2\leq 9 \|y_0\|_{L^\infty}^2.
\end{align}
Hence, there exists $\theta_0$ (depending on $\|y_0\|_{L^\infty}$ and $\gamma$) which together with \cref{eq:Lyapunov_upper} implies that for any $\theta \geq \theta_0$

\begin{align}
    \label{eq:Lyapunov_upper1}
    \frac{1}{2} \frac{d}{dt}\|\delta(t) \|^2_{L^2(Adx)}
    +\frac{1}{2} \|\delta_x \|_{L^2(Adx)}^2 \leq -\frac{\gamma^2 \theta^{2(\gamma-1)}}{4} \|\delta(t) \|^2_{L^2(Adx)}.
\end{align}
Using the expression of $A$ \eqref{eq:def_A}, we estimate the initial condition
\begin{align}
    \label{eq:L2A_estimate}
    \|\delta(\frac{T}{2})\|_{L^2(Adx)}^2\leq \|\delta(\frac{T}{2})^2 A\|_{L^\infty} \leq 9 \|y_0\|_{L^\infty}^2 e^{\frac{\gamma}{2} \theta^{\gamma-1}}.
\end{align}
Gronwall inequality applied to \cref{eq:Lyapunov_upper1,eq:L2A_estimate} gives
\begin{align}
    \|\delta(t) \|^2_{L^2(Adx)}\leq  9\|y_0\|_{L^\infty}^2 e^{\frac{\gamma}{2} \theta^{\gamma-1}} e^{-\frac{\gamma^2 \theta^{2(\gamma-1)}}{2} (t-T')}.
\end{align}
Integrating \cref{eq:Lyapunov_upper1} on $(\frac{3T}{4},T)$, we get
\begin{align}
    \|\delta_x \|^2_{L^2(\frac{3T}{4},T;L^2(Adx))}&\leq  \|\delta(3T/4)\|_{L^2(Adx)} \notag \\
    \label{eq:lyapunov_exp_decrease}
    &\leq  9\|y_0\|_{L^\infty}^2 e^{\frac{\gamma}{2} \theta^{\gamma-1}} e^{-\frac{\gamma^2 \theta^{2(\gamma-1)}}{2} (T/4)}
    \xrightarrow{\theta \to \infty} 0.
\end{align}
Hence, there exists $t^*\in [\frac{3T}{4},T]$ such that 
\begin{align}
    \|\delta(t^*)\|^2_{H^1}\leq C \|\delta_x(t^*)\|^2_{L^2(Adx)}\leq \frac{4C}{T} \|\delta_x \|^2_{L^2(\frac{3T}{4},T;L^2(Adx))}.
\end{align}
Using a classical parabolic estimate on the heat equation (see \cref{ap:el_reg}), we get
\begin{align}
    \|\delta(T)\|_{H^1_0} &\leq \|\delta(t^*)\|_{H^1_0}
    + \| \big((\theta+\delta)^\gamma - \theta^\gamma\big)_x\|_{L^2(t^*,T;L^2(dx))}^2\\
    &\leq \frac{4C}{T} \|\delta_x \|^2_{L^2(\frac{3T}{4},T;L^2(Adx))}
    +\gamma^2 (\theta+3\|y_0\|_{L^\infty})^{2(\gamma-1)}  \|\delta\|^2_{L^2(t^*,T;H^1_0)} .
\end{align}
Thanks to the exponential decrease in $\theta$ of \cref{eq:lyapunov_exp_decrease}, we obtain that 
\begin{align}
    \|\delta(T)\|_{H^1_0} \xrightarrow{\theta\to \infty} 0,
\end{align}
which proves \cref{eq:upper_bound_step1} and conclude the proof of \cref{prop:main_first_stage}.

\section{Hyperbolic stage, second part: toward a neighborhood of zero up to a boundary layer}
Thanks to the previous part, we reduced our problem to the case where the initial condition $y_0$ satisfies for some $\theta, \eta>0$,
\begin{align}
    \label{eq:initial_cond}
    \vartheta(x) \leq y_0(x) \leq \theta+ \eta.
\end{align}
In this section we prove that we can steer the solution of the system \eqref{eq:gen_burgers} to a small neighborhood of the null state up to a boundary residue near the right endpoint.

\begin{lemma}
    \label{lem:neighborhood_of_zero}
    Let $T>0$ and $\theta>0$. There exists $T'\leq T$, $u,v\in L^\infty(0,T') \times (L^\infty(0,T')\cap H^{1/4}(0,T'))$ such that
    for any $y_0$ satisfying \cref{eq:initial_cond}, we have 
    \begin{align}
        \vartheta(x)-\theta-\eta<y(T',x)<\eta.
    \end{align}
\end{lemma}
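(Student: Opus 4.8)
The plan is to run everything through the comparison principle, pushing the profile down with the pressure-like control while keeping the solution trapped between an explicit supersolution and an explicit subsolution. On a short interval $[0,T']$ with $T'\le T$, I would take a large constant internal control $u(t)=-M$ together with a left boundary control $v$ decreasing from roughly $\theta$ to roughly $0$; the parameters are tuned by $MT'=\theta$, and $T'$ is taken small (equivalently $M$ large), which is exactly the hyperbolic, control-dominated regime of this section. Concretely one can take $v(t)\in[\theta-Mt,\,\theta+\eta-Mt]$, e.g.\ $v(t)=\theta-Mt+\eta/2$, so that the boundary datum sits between the boundary values of the two barriers built below.

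For the upper bound, the spatially constant function $\bar{y}(t,x)=\theta+\eta-Mt$ is an exact solution of the equation with control $u=-M$, and it stays nonnegative on $[0,T']$, so it dominates the right Dirichlet datum $0$. Since $v(t)\le\bar{y}(t)$ and $y_0\le\theta+\eta=\bar{y}(0)$, the comparison principle gives $y\le\bar{y}$, hence $y(T',\cdot)\le\bar{y}(T')=\eta$. This is the easy direction.

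For the lower bound I would use the lowered steady state $\un{y}(t,x)=\vartheta(x)+c(t)$ with $c(t)=-Mt$, so that $c(0)=0$ gives $\un{y}(0,\cdot)=\vartheta\le y_0$, while $c(T')=-\theta$ yields exactly $\un{y}(T',\cdot)=\vartheta-\theta\ge\vartheta-\theta-\eta$, and $\un{y}(t,1)=c(t)\le 0$ respects the right boundary. Since $\vartheta_{xx}=(\vartheta^\gamma)_x$ and $c'=u$, the subsolution residual collapses to $\gamma(|\vartheta+c|^{\gamma-1}-\vartheta^{\gamma-1})\vartheta_x$, and because $\vartheta_x\le 0$ one checks that $\un{y}$ is a genuine subsolution precisely on the inner region $\{\vartheta\le|c|/2\}$ near $x=1$, where $|\vartheta+c|\ge\vartheta$. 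This is exactly the region in which the boundary residue forms, and its width is controlled by the boundary-layer estimates of \cref{lem:boundary_layer}.

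The main obstacle is the outer, bulk region $\{\vartheta>|c|/2\}$: there the convexity of $z\mapsto|z|^{\gamma-1}z$ reverses the sign, $\vartheta+c$ fails to be a subsolution, and the global comparison principle does not apply directly, so one must still rule out a downward overshoot below $-\eta$ in the bulk at time $T'$. I expect this to be the heart of the proof. The defect is large pointwise (of order $\theta^{2\gamma-1}$) but is concentrated away from $x=1$, where $|\vartheta_x|$ is comparatively small, while the strong rightward transport of speed $\sim\gamma\theta^{\gamma-1}$ sweeps any accumulated error out through $x=1$ on the short time scale $T'$. I would therefore correct the subsolution to $\un{y}=\vartheta+c-p$ with $p\ge 0$ solving a heat inequality sourced by this defect and prove $p(T',\cdot)\le\eta$ by exploiting the smallness of $T'$ and the largeness of $M$; alternatively one can get the bulk bound from a direct maximum-principle estimate keeping the bulk near the decreasing boundary value $v(t)\ge-\eta$. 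Either route reduces, as anticipated in the introduction, to the delicate interplay between the viscous boundary layer and the inertial transport near the uncontrolled endpoint.
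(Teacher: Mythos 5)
Your choice of controls ($u=-M=-\theta/T'$, $v$ decreasing linearly from $\theta$ to about $0$) and your upper bound via the spatially constant supersolution $\theta+\eta-Mt$ coincide with the paper's argument. For the lower bound, however, the step you yourself flag as ``the heart of the proof'' is exactly the part you leave unexecuted, and this is a genuine gap: you correctly compute that $\vartheta(x)-Mt$ fails to be a pointwise subsolution wherever $\vartheta>Mt/2$, but your proposed repair --- subtract a correction $p\geq 0$ solving a heat \emph{inequality} sourced by the defect --- does not close as stated, because replacing $\vartheta+c$ by $\vartheta+c-p$ also changes the nonlinear transport term, so a $p$ built from the linear defect alone does not make $\vartheta+c-p$ a subsolution; you would have to re-verify the differential inequality for the corrected ansatz, which is precisely the nontrivial part. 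Your alternative (``a direct maximum-principle estimate keeping the bulk near $v(t)$'') is likewise only a statement of intent.

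The paper avoids the pointwise-subsolution route altogether. It takes $\un{y}$ to be the \emph{exact} solution of the PDE with the same left boundary datum, right boundary datum $-\theta t/T'\leq 0$ and initial datum $\vartheta\leq y_0$, so $\un{y}\leq y$ is immediate from the comparison principle with no sign condition on any residual. It then sets $\delta=\un{y}-\vartheta+\theta t/T'$, which has zero initial and boundary data and solves a heat equation whose source is bounded in $H^{-1}$ by $C(\theta,\gamma)(1+\|\delta\|_{L^2})$ \emph{uniformly in $T'$}; Gronwall and the parabolic regularity estimate of \cref{ap:el_reg} then give $\|\delta(T')\|_{H^1_0}^2=O_{T'\to 0}(T')$, hence $\|\delta(T')\|_{L^\infty}<\eta$ for $T'$ small. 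Note that the constants are allowed to blow up in $\theta$ --- only $T'\to 0$ at fixed $\theta$ is used --- so the ``delicate interplay between the boundary layer and the inertial transport'' you anticipate plays no role in this lemma; that difficulty is deferred entirely to the passive stage (\cref{prop:main_stage2}). To complete your proof you should either switch to this exact-solution-plus-energy-estimate argument or carry out in full the verification that your corrected ansatz is a subsolution.
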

    \begin{proof}
    Let us consider the controls $u(t)= -\frac{\theta}{T'}$ and $v(t)=\theta (1- \frac{t}{T'})$ on $[0,T']$ for some $T'\leq T$ which will be chosen later. We denote by $y$ the corresponding solution of \cref{eq:gen_burgers} with any initial condition satisfying \cref{eq:initial_cond}. Then, we define a subsolution $\un{y}$ and a supersolution $\bar{y}$ by
    \begin{equation}
        \left\{
            \begin{array}{ll}
                \un{y}_t+\gamma|\un{y}|^{\gamma-1}\un{y}_x-\un{y}_{xx}=u(t)& \text{on }(0,T')\times (0,1)  , \\
                \un{y}(t,0)=v(t)& \text{on }(0,T') , \\
                \un{y}(t,1)=-\frac{\theta t}{T'} & \text{on }(0,T') , \\
                \un{y}(0,x)=\vartheta& \text{on }(0,1) ,
            \end{array}
            \right.
    \end{equation}
    and 
    \begin{equation}
        \left\{
            \begin{array}{ll}
                \bar{y}_t+\gamma|\bar{y}|^{\gamma-1}\bar{y}_x-\bar{y}_{xx}=u(t)& \text{on }(0,T')\times (0,1)  ,\\
                \bar{y}(t,0)=v(t)+\eta& \text{on }(0,T') ,\\
                \bar{y}(t,1)=v(t)+\eta & \text{on }(0,T') ,\\
                \bar{y}(0,x)=\theta+\eta& \text{on }(0,1) .
            \end{array}
        \right.
    \end{equation}
    We easily check that $\bar{y}(t)=v(t)+\eta$, thus $\bar{y}(T')=\eta$. This concludes the proof of the upper bound.

    Let us now focus on the subsolution. We define $\delta(t,x)=\un{y}(t,x)-\vartheta(x)+\frac{\theta t}{T'}$. Then, $\delta$ is solution of
    \begin{equation}
            \left\{
            \begin{array}{ll}
                \delta_t-\delta_{xx}=(\vartheta^\gamma)_x-\gamma |\vartheta-\frac{\theta t}{T'}+\delta|^{\gamma-1} (\vartheta-\frac{\theta t}{T'}+\delta)_x& \text{on }(0,T')\times (0,1)  , \\
                \delta(t,0)=0& \text{on }(0,T') , \\
                \delta(t,1)=0 & \text{on }(0,T') , \\
                \delta(0,x)=0& \text{on }(0,1) .
            \end{array}
            \right.
    \end{equation}
    First, note that we have the bound 
    \begin{align*}
        \|(\vartheta^\gamma)_x\|_{H^{-1}}\leq C(\theta,\gamma )
    \end{align*}
    and notice that 
    \begin{align*}
        \|\left(\gamma |\vartheta-\frac{\theta t}{T'}+\delta|^{\gamma-1} (\vartheta-\frac{\theta t}{T'}+\delta)_x \right)\|_{H^{-1}}=\|\left( \left| \vartheta-\frac{\theta t}{T'}+\delta(t) \right|^\gamma \right)_x\|_{H^{-1}}.
    \end{align*}
    We can estimate this $H^{-1}(0,1)$ norm as follows:
    \begin{align*}
        \|\left( \left| \vartheta-\frac{\theta t}{T'}+\delta(t) \right|^\gamma \right)_x\|_{H^{-1}}
        &=\|\left( \left| \vartheta-\frac{\theta t}{T'}+\delta(t) \right|^\gamma \right)_x-\left( \left| \vartheta-\frac{\theta t}{T'} \right|^\gamma \right)_x+\left( \left| \vartheta-\frac{\theta t}{T'} \right|^\gamma \right)_x\|_{H^{-1}}\\
        &\leq \|\left( \left| \vartheta-\frac{\theta t}{T'}+\delta(t) \right|^\gamma \right)- \left( \left| \vartheta-\frac{\theta t}{T'} \right|^\gamma \right)\|_{L^2}+C(\theta,\gamma )\\
        &\leq C(\theta,\gamma ) \|\delta(t)\|_{L^2}+C(\theta,\gamma ).
    \end{align*}
    Thus, applying a regularity estimate for the heat equation reproduced in \cref{ap:el_reg},
    \begin{align}
        \|\delta(t)\|^2_{L^2} +\|\delta\|_{L^2(0,t;H^1_0)}^2 \leq \int_0^t C(\theta,\gamma ) (\|\delta(s)\|^2_{L^2}+1) ds.
    \end{align}
    Thanks to Gronwall inequality,
    \begin{align}
        \|\delta(t)\|_{L^2}^2 \leq C(\theta,\gamma )t e^{C(\theta,\gamma )t}.
    \end{align}
    After an integration in time, we obtain
    \begin{align}
        \|\delta\|_{L^2(0,t;H^1_0)}^2 \leq \frac{1}{C(\theta,\gamma )} \left(e^{C(\theta,\gamma )t} (C(\theta,\gamma )t-1) +1 \right)= O_{t\to 0}(t^2).
    \end{align}
    Now, using that there exists a constant $C(\theta,\gamma)$ such that
    \begin{align}
        \|(|\vartheta+\frac{\theta t}{T}+\delta(t)|^\gamma)_x\|_{L^2}\leq C(\theta,\gamma ) \|\delta(t)\|_{H^1} +C(\theta,\gamma ),
    \end{align}
    together with \cref{ap:el_reg} again, we get
    \begin{align}
        \|\delta(t)\|^2_{H^1_0}\leq C(\theta,\gamma ) \|\delta\|_{L^2(0,t;H^1_0)}^2 +C(\theta,\gamma )t.
    \end{align}
    As those estimates are uniform in $T'$, we have
    \begin{align}
        \|\delta(T')\|^2_{H^1_0} =O_{T'\to 0}(T').
    \end{align}
    This implies that there exists $T'<T$ such that
    \begin{align}
        \|\delta(T')\|_{L^\infty}<\eta.
    \end{align}
    This concludes the proof of \cref{lem:neighborhood_of_zero}.
\end{proof}
\section{Passive stage: dissipation of the boundary residue}
\label{sec:passive_stage}
In this stage, we start with an initial condition satisfying
\begin{align}
    \label{eq:init_stage_2}
        \vartheta(x)-\theta-\eta<y(T',x)<\eta, \qquad x\in[0,1]
\end{align}
and we prove the dissipation of the residue $ \vartheta(x)-\theta$.
\begin{proposition}
    \label{prop:main_stage2}
    For a given $T>0$, there exist some constants $C(T)>0$ and $\theta_0>0$ such that for every $\theta\geq \theta_0$, every $\eta>0$ in a neighborhood of zero and every initial condition $y_0$ satisfying \cref{eq:init_stage_2}, the solution of \cref{eq:gen_burgers} with null controls satisfies 
    \begin{align}
       -C(T) \eta \leq y(T)\leq \eta, \quad & y(T)\in H^1_0(0,1).
    \end{align}
\end{proposition}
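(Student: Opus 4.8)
The plan is to establish the two inequalities separately, the upper one being immediate and the lower one — the genuine dissipation of the boundary residue — carrying all the difficulty. With null controls the boundary data of \eqref{eq:gen_burgers} read $y(t,0)=y(t,1)=0$, so the constant function $\eta$ is a stationary supersolution: it solves the equation with $u=0$, its boundary traces dominate $0$, and it dominates $y_0$ by \eqref{eq:init_stage_2}. The comparison principle then gives $y(t,\cdot)\le\eta$ on $[0,T]$, which is the upper bound.

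For the lower bound I would compare $y$ from below with the solution $\un{y}$ of \eqref{eq:gen_burgers} having null controls and initial datum $\vartheta-\theta-\eta$. Since $\vartheta-\theta-\eta\le y_0$ and the boundary data coincide, the comparison principle yields $\un{y}\le y$; moreover $\un{y}\le 0$ because $\vartheta\le\theta$. It is then natural to set $p:=-\un{y}\ge 0$, which solves
\begin{align}
    p_t+(p^\gamma)_x-p_{xx}=0, \quad p(t,0)=p(t,1)=0, \quad p(0,\cdot)=\theta-\vartheta+\eta .
\end{align}
Everything reduces to proving $\|p(T)\|_{L^\infty}\le C(T)\eta$, for then $y(T)\ge\un{y}(T)=-p(T)\ge -C(T)\eta$.

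The heart of the argument is the monotonicity of the weighted mass
\begin{align}
    m(t):=\int_0^1 (1-x)\,p(t,x)\,dx .
\end{align}
Multiplying the equation by $(1-x)$, integrating over $(0,1)$ and using $p(t,0)=p(t,1)=0$, the viscous term contributes $-p_x(t,0)$ and the transport term — here in divergence form and, crucially, with the favorable sign for \eqref{eq:gen_burgers} — contributes $-\int_0^1 p^\gamma\,dx$, whence
\begin{align}
    m'(t)=-p_x(t,0)-\int_0^1 p^\gamma(t,x)\,dx\le 0 .
\end{align}
This is precisely the place where the sign of the nonlinearity is decisive: for \eqref{eq:gen_burgers_sign} the transport term changes sign, the last display loses its sign, and one is forced to assume $\gamma\ge 2$ — this is the obstruction announced after \cref{th:main}. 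Integrating in time also yields the space–time bound $\int_0^T\!\!\int_0^1 p^\gamma\le m(0)$, useful below. It remains to estimate $m(0)$. By \cref{lem:boundary_layer}, $\theta-\vartheta$ is of order $\theta$ only inside a layer of width $\sim\theta^{1-\gamma}$ at $x=1$ and is super-algebraically small away from it; since the weight $(1-x)$ is itself of order $\theta^{1-\gamma}$ there, a direct estimate gives
\begin{align}
    m(0)=\int_0^1 (1-x)(\theta-\vartheta)\,dx+\tfrac12\,\eta=O\!\left(\theta^{3-2\gamma}\right)+\tfrac12\,\eta .
\end{align}
This is exactly where $\gamma>3/2$ enters: it forces $3-2\gamma<0$, so that $\theta^{3-2\gamma}\to0$ and, for $\theta$ large enough, $m(0)\le C\eta$.

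The final and most delicate step is to turn the smallness of the weighted mass into the pointwise estimate $\|p(T)\|_{L^\infty}\le C(T)\eta$, which simultaneously yields $p(T)\in H^1_0$, hence $y(T)\in H^1_0$ via the one-dimensional embedding $H^1_0(0,1)\hookrightarrow\mathscr{C}^0$. Here I would combine the three facts, valid for $t\in[0,T]$: the uniform bound $0\le p\le\theta+\eta$; the smallness $m(t)\le m(0)\le C\eta$; and the space–time smallness $\int_0^T\!\!\int_0^1 p^\gamma\le C\eta$. The idea is to select an intermediate time at which $p$ is already small in an integral sense, and then run a parabolic smoothing/energy estimate (as in \cref{ap:el_reg}) on the remaining interval to propagate this smallness up to an $H^1_0$ bound at $t=T$, the diffusion absorbing the thin layer near $x=1$ where the weight $(1-x)$ degenerates. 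I expect the genuine difficulties to be exactly two: first, the computation $m(0)=O(\theta^{3-2\gamma})$ from the implicit description of $\vartheta$, which is where the threshold $3/2$ is sharp; and second, this last conversion, since for $3/2<\gamma<2$ the residue still carries a large weighted $L^2$–energy (of order $\theta^{4-2\gamma}$), so that only the transport-driven evacuation through $x=1$, rather than pure diffusion, makes the sup-norm small within the fixed time $T$.
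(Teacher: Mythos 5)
Your argument coincides with the paper's proof up to the very last step. The upper bound by comparison with the constant supersolution $\eta$; the reduction to a nonpositive subsolution with null data; the substitution $p=-\un{y}$ giving the conservation form $p_t+(p^\gamma)_x-p_{xx}=0$ (you got the sign of the flux right, and correctly identified that this is exactly where \eqref{eq:gen_burgers_sign} breaks for $\gamma<2$); the monotone weighted moment $m(t)=\int_0^1(1-x)p$ with $m'=-p_x(t,0)-\int_0^1 p^\gamma\le 0$; the bound on $m(0)$ via \cref{lem:boundary_layer} (the paper uses point (2) with $\tfrac12<\alpha<\gamma-1$, so the exponent is $1-2\alpha$ rather than $3-2\gamma$ because of the logarithmic thickening of the layer, but this is your computation and it is exactly where $\gamma>3/2$ enters); and the selection of an intermediate time $t^*\in(0,T/2)$ at which $p$ is small in an integral sense: all of this is the paper's argument, in the same order.

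The genuine gap is the final conversion, which you leave as ``run a parabolic smoothing/energy estimate (as in \cref{ap:el_reg})''. That estimate cannot close the argument: it is $L^2$-based, whereas at $t^*$ you control only $\|p(t^*)\|_{L^\gamma}$ and $\|p(t^*)\|_{L^1}$ while $\|p(t^*)\|_{L^2}^2$ may still be of order $\theta^{2-\gamma}\eta$ when $\gamma<2$, and treating $(p^\gamma)_x$ as a source in $L^2(0,T;H^{-1})$ produces a Gronwall factor of order $e^{C\theta^{2(\gamma-1)}T}$ that destroys any smallness. What is needed, and what the paper invokes, is an amplitude-independent $L^1\to L^\infty$ smoothing estimate for viscous scalar conservation laws with convex flux, $\|p(t)\|_{L^\infty}\le C(t-t^*)^{-1/2}\|p(t^*)\|_{L^1}$: the Carlen--Loss bound \cite{carlenOptimalSmoothingDecay1995} when $\gamma\ge2$ (uniformly convex flux), and the analogous lemmas of \cite{feireislL1StabilityConstant1999} or \cite{bankViscousConservationLaws2020} for $3/2<\gamma<2$. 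This is the one missing ingredient, and you correctly flagged it as the delicate point without supplying it. Two smaller remarks: a sup-norm bound does not yield $p(T)\in H^1_0$ --- the embedding $H^1_0(0,1)\hookrightarrow\mathscr{C}^0$ goes the other way, and the $H^1_0$ membership comes from instantaneous parabolic regularization; and your closing intuition that ``transport-driven evacuation rather than pure diffusion'' makes the sup-norm small is slightly misplaced --- the transport toward $x=1$ is used only to obtain the space--time $L^\gamma$ bound and hence the good time $t^*$, while the final sup-norm bound is pure diffusive smoothing from the small $L^1$ datum at $t^*$.
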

The upper bound is just a consequence of the comparison principle. Let us thus focus on the lower bound. If $\gamma$ is smaller, the boundary layer is larger and the time needed for the dissipation increases. \cref{fig:dissipation} illustrates this phenomenon. If $\gamma\leq 3/2$, our estimates are not sufficient to prove \cref{prop:main_stage2}.
\begin{figure}
    \includegraphics[width=0.85\textwidth]{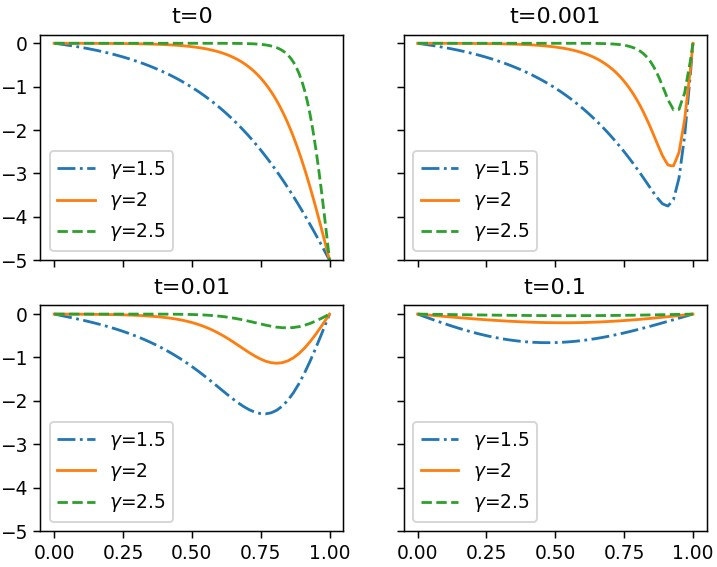}
    \caption{Simulation of the dissipation of the boundary layer residue for different values $\gamma$.}
    \label{fig:dissipation}
\end{figure}
In order to estimate the size of the boundary layer, we use the second point of \cref{lem:boundary_layer}. That is, for $\frac{1}{2}<\alpha< \gamma -1$, there exists $\tilde C$ such that 
\begin{align}
    \label{eq:initial_estimate_uny}
     - 2\eta -\theta  \mathbb{1}_{x\geq 1-\tilde C \theta ^{-\alpha}} \leq  \vartheta(x)-\theta -\eta.
\end{align}
We also set $\eps_1=\gamma-1-\alpha>0$ as we will need later to take $\alpha$ close enough to $\gamma-1$.

We introduce the following subsolution:
\begin{align}
    \label{eq:boundary_inf}
    \left\{
        \begin{array}{rll}
             \un y_t +( |\un y|^\gamma)_x- \un y_{xx}&=0 & \text{on }(0,T)\times (0,1)  , \\
             \un y(t,0)&=0 & \text{on }(0,T) , \\
             \un y(t,1)&=0 & \text{on }(0,T) , \\
             \un y(0)&=- 2\eta -\theta  \mathbb{1}_{x\geq 1-\tilde C \theta ^{-\alpha}} & \text{on }(0,1) .
        \end{array}
    \right.
\end{align}
Note that $\un y$ is a non-positive solution. Besides
\begin{align}
    \| \un y(0) \|_{L^1}=2 \eta +\tilde C \theta^{2-\gamma+\eps_1}.
\end{align}
Hence, if $\gamma >2$, we have
\begin{align}
    \lim_{\theta\to \infty}  \| \un y(0) \|_{L^1}=2 \eta.
\end{align}
We use a smoothing result by Carlen and Loss \cite[Theorem 1]{carlenOptimalSmoothingDecay1995} for strictly convex conservation laws to deduce
\begin{align}
    \label{eq:smoothing_CL}
    \| \un y(t) \|_{L^\infty} \leq \frac{\| \un y(0) \|_{L^1}}{\sqrt{4\pi t}}.
\end{align}
As a consequence, there exists $\theta_0$ such that, for any $\theta \geq \theta_0$, 
\begin{align}
    \| \un y(T) \|_{L^\infty}\leq \frac{3 \eta}{\sqrt{4 \pi T}},
\end{align}
which concludes the proof for $\gamma>2$.

In the case  $\gamma <2$, the boundary layer grows with $\theta $. The smoothing property of \cref{eq:smoothing_CL} does not exploit the initial localisation near the boundary. Hence, to handle $\gamma$ small, we use instead a weighted estimate.


We multiply \cref{eq:boundary_inf} by $(x-1)$ and integrate on space to get
\begin{align}
    \label{eq:estimate_weighted}
    \frac{d}{dt}\int_{0}^1 (x-1)\un y (t,x) dx + \int_{0}^1 |\un y(t,x)|^\gamma dx -\un y_x(0)=0.
\end{align}
Note that $\int_{0}^1 (x-1)\un y (t,x) dx$ and $-\un y_x(0)$ are both non-negative as $\un y \leq 0$. Besides,
\begin{align}
    \label{eq:init_weighted}
    \int_{0}^1 (x-1) (-2 \eta \mathbb{1}_{x\geq 0} -\theta  \mathbb{1}_{x\geq 1-\tilde C \theta ^{-\alpha}}) dx=\frac{\tilde C^2 \theta ^{1-2\alpha}}{2} + \eta.
\end{align}
We integrate \cref{eq:estimate_weighted} on $(0,\frac{T}{2})$ and use \cref{eq:init_weighted} to get the estimate
\begin{align}
    \label{eq:estimate_Lgamma}
    \|\un y\|^\gamma_{ L^\gamma((0,\frac{T}{2})\times (0,1)) } \leq \frac{\tilde C^2 \theta ^{1-2\alpha}}{2} + \eta.
\end{align}
Note that $\theta ^{1-2\alpha}$ goes to $0$ as $\theta$ goes to $\infty$. This is the case because we assumed $\gamma >3/2$.

Using \cref{eq:estimate_Lgamma}, there exists $C$ such that for $\theta$ large enough, there exist $t^*\in (0,\frac{T}{2})$ satisfying
\begin{align}
    \|\un y(t^*)\|_{L^1(0,1)}< C \eta.
\end{align}
We cannot use the smoothing result from Carlen and Loss as it requires the flux function to be strongly convex (hence $\gamma\geq 2$), but a similar result (without an explicit expression for the constant) is available in \cite[Lemma 3.1]{feireislL1StabilityConstant1999} or \cite[Lemma 4.2]{bankViscousConservationLaws2020}. Finally, we get
\begin{align}
    \|\un y (T)\|_{L^\infty} \leq \frac{C}{\sqrt{T-t^*}}\|\un y(t^*)\|_{L^1} \leq C' \eta.
\end{align}
This concludes the proof of \cref{prop:main_stage2}.
\section{Parabolic stage: local null exact controllability}
\label{subsec:local_exact}

The local null exact controllability of semi-linear parabolic equations has been established for a wide variety of cases. For completeness, we provide a sketch of proof based on \cite[Lemma 2]{fernandezNullControllabilityBurgers2007}.

\begin{lemma}
    \label{lem:local_exact}
    There exists $C^*>0$ such that for every $  y_0  \in {H^1_0}$ satisfying $\|y_0\|_{L^\infty}<\frac{1}{2\gamma}$ and $\|y_0\|_{L^2}\leq \frac{1}{2\gamma T} e^{-C^*/T}$, there exists a control $v\in H^{1/4}(0,T)\cap L^\infty(0,T)$ such that the solution of \eqref{eq:gen_burgers} with controls $u=0$ and $v$ satisfies $y(T)=0$.
\end{lemma}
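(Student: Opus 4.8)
The plan is to realize the controlled solution as a fixed point of a map built from the null controllability of the underlying linear heat equation, treating the convective term as a source, in the spirit of \cite[Lemma 2]{fernandezNullControllabilityBurgers2007}. Since $\gamma>3/2$, the coefficient $\gamma|y|^{\gamma-1}$ is H\"older continuous and vanishes at $y=0$, so near the null state the nonlinearity is a genuinely small perturbation of the heat operator; the role of the hypotheses $\|y_0\|_{L^\infty}<\frac{1}{2\gamma}$ and $\|y_0\|_{L^2}\le \frac{1}{2\gamma T}e^{-C^*/T}$ is precisely to quantify this smallness against the cost of the linear control.

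\textbf{Step 1: the linear controlled heat equation.} First I would recall that the one-dimensional heat equation with a Dirichlet boundary control at $x=0$,
\[
z_t-z_{xx}=f,\quad z(t,0)=v(t),\quad z(t,1)=0,\quad z(0)=y_0,
\]
is null controllable in arbitrary time $T>0$. This is classical and follows from an observability inequality for the adjoint equation, itself a consequence of a global Carleman estimate (or, in one dimension, of the moment method). The quantitative form I need is that the control can be chosen with cost $e^{C^*/T}$: for a source $f$ lying in an $L^2$ space suitably weighted so as to decay as $t\to T$, there is a control $v$ driving the state to $z(T)=0$, together with an estimate of $\|z\|$ in a space $X$ such as $\mathscr C^0([0,T];L^2)\cap L^2(0,T;H^1_0)$ (after subtracting a lift of the boundary datum) by $e^{C^*/T}$ times $\|y_0\|_{L^2}+\|f\|$. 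The constant $C^*$ is exactly the one appearing in the statement.

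\textbf{Step 2: the fixed point.} Given $\bar y$ in a ball $B$ of $X$ on which $\|\bar y\|_{L^\infty}\le \frac{1}{2\gamma}$, set $f(\bar y)=-\gamma|\bar y|^{\gamma-1}\bar y_x$ and let $\Lambda(\bar y)=z$ be the controlled solution produced by Step 1 with $z(T)=0$. A fixed point of $\Lambda$ solves \eqref{eq:gen_burgers} with $u=0$ and is steered to $0$ at time $T$. The self-map property $\Lambda(B)\subseteq B$ comes from the smallness of $y_0$: on $B$ one has $\|f(\bar y)\|_{L^2}\le \gamma\|\bar y\|_{L^\infty}^{\gamma-1}\|\bar y_x\|_{L^2}$, so the bound $\|\bar y\|_{L^\infty}\le\frac{1}{2\gamma}$ makes the convective coefficient uniformly small and renders $f(\bar y)$ a controlled multiple of $\bar y_x$; combined with the cost $e^{C^*/T}$ and the hypothesis $\|y_0\|_{L^2}\le\frac{1}{2\gamma T}e^{-C^*/T}$, the radius of $B$ is preserved. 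Compactness of $\Lambda$ follows from parabolic smoothing (using \cref{ap:el_reg}) together with the Aubin--Lions lemma, and continuity from the local Lipschitz character of $z\mapsto|z|^{\gamma-1}z$ on bounded sets, valid since $\gamma>3/2$. Schauder's fixed-point theorem then yields the desired solution, and the boundary control $v$ inherits the regularity $H^{1/4}(0,T)\cap L^\infty(0,T)$ from the linear construction.

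\textbf{Main obstacle.} The delicate point is matching the control cost $e^{C^*/T}$, which blows up as $T\to 0$, with the regularity and decay required of the source $f$ near $t=T$: in the weighted framework that makes the linear null-controllability theorem applicable, $f$ must vanish at a prescribed rate as $t\to T$, and one must verify that $f(\bar y)=-\gamma|\bar y|^{\gamma-1}\bar y_x$ inherits this decay from $\bar y$ while remaining in the weighted $L^2$ space with a bound that closes the self-map estimate. The facts that the control acts only on the boundary rather than in the interior, and that the flux has limited smoothness at the origin, are the two further technical points to handle with care, but both are kept under control by the assumption $\gamma>3/2$ together with the two smallness hypotheses on $y_0$.
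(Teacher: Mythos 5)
Your proposal follows the general fixed-point philosophy of \cite[Lemma 2]{fernandezNullControllabilityBurgers2007}, but it linearizes in a genuinely different way from the paper, and that choice creates a gap that your sketch does not close. You treat the whole convective term as a \emph{source}, $f(\bar y)=-\gamma|\bar y|^{\gamma-1}\bar y_x$, and invoke null controllability of the heat equation with a right-hand side. As you yourself flag in your ``main obstacle'' paragraph, exact null controllability with a source term requires $f$ to lie in a Carleman-weighted $L^2$ space whose weight blows up as $t\to T$, i.e.\ $f$ must decay at a prescribed rate near the final time; otherwise the duality identity $\langle y_0,\varphi(0)\rangle+\iint f\varphi+\iint_\omega w\varphi=0$ cannot be solved using the observability inequality alone. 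Setting up the fixed point in these weighted spaces and verifying that $f(\bar y)$ inherits the decay from $\bar y$ is precisely the hard technical content of the source-term method, and your proposal names it without carrying it out. Moreover, your self-map estimate does not close as written: the bound $\|f(\bar y)\|_{L^2}\leq\gamma\|\bar y\|_{L^\infty}^{\gamma-1}\|\bar y_x\|_{L^2}$ is \emph{linear} in $\|\bar y_x\|_{L^2}$ with a constant that is merely $<1$, and this cannot absorb the control cost $e^{C^*/T}$, which is large. To make the scheme work you would have to exploit the superlinear character of the nonlinearity, $\|f(\bar y)\|\lesssim\|\bar y\|_X^{\gamma}$, on a ball of radius $R$ with $e^{C^*/T}R^{\gamma}\leq R/2$, which is where the smallness hypothesis on $\|y_0\|_{L^2}$ would actually have to enter.

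The paper avoids all of this by linearizing differently: it writes the nonlinearity as a \emph{drift coefficient}, $y_t+\varphi(z)y_x-y_{xx}=w\mathbb{1}_\omega$ with $\varphi(z)=\gamma|z|^{\gamma-1}$, and restricts to the convex set $K=\{\|z\|_{L^\infty}\leq 1/\gamma\}$ so that $\|\varphi(z)\|_{L^\infty}\leq 1$. The results of \cite{fernandezNullControllabilityBurgers2007,fernandezNullApproximateControllability2000} give null controllability of this linear equation with zero source and a cost $e^{C^*/T}$ \emph{uniform} over all drifts bounded by $1$, so no weighted decay of any right-hand side is needed. The two smallness hypotheses on $y_0$ are then used only through the maximum principle, $\|y\|_{L^\infty}\leq\|y_0\|_{L^\infty}+T\|w\|_{L^\infty}\leq\frac{1}{2\gamma}+T\cdot\frac{1}{2\gamma T}=\frac{1}{\gamma}$, to guarantee that the map sends $K$ into $K$. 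Because the control for the linearized problem is not unique, the map is set-valued and the paper uses Kakutani's theorem rather than Schauder's; and the boundary control is obtained by the standard domain-extension trick (internal control on $\omega\subset(-1,0)$, then take the trace at $x=0$, which lands in $H^{3/4}(0,T)$), rather than by a direct boundary construction. If you want to keep your source-term route you must supply the weighted-space framework and correct the self-map estimate; otherwise the drift-coefficient linearization is the cleaner path.
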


\begin{proof}[Sketch of the proof]
    First, we transform the boundary control problem into an internal control one.
    Namely, we consider the wider space domain $[-1,1]$ and a domain of the internal control $\omega\subset (-1,0)$ with non-empty interior. We denote $Q=(0,T)\times [-1,1]$. Let $w$ be an internal control acting on $(0,T)\times \omega$. We are interested in the local null exact controllability of
    \begin{equation}
        \label{eq:gen_burgers_extended}
        \left\{
            \begin{array}{ll}
                y_t+\gamma |y|^{\gamma-1}y_x-y_{xx}=w(t,x)\mathbb{1}_{(0,T)\times \omega} \quad & \text{on }Q  , \\
                y(t,0)=0& \text{on }(0,T) , \\
                y(t,1)=0& \text{on }(0,T) , \\
                y(0,x)=\tilde y_0(x)&\text{on }(-1,1) ,
            \end{array}
        \right.
    \end{equation}
    where $\tilde y_0\in H^1_0(0,1)$ is the extension of $y_0$ by $0$ on $(-1,1)$.
    
    We set $s\in (0,1)$ and introduce the closed convex set
    \begin{align}
        K= \{ z \in H^s(Q) \mid \|z\|_{L^\infty(Q)}\leq 1/\gamma\}
    \end{align}
    and the following set of admissible controls
    \begin{align}
        \mathcal{A}_0= \{ w \in L^\infty((0,T)\times \omega) \mid \|w\|_{L^\infty(Q)}\leq \|y_0\|_{L^2} e^{C^* /T}\},
    \end{align}
    where $C^*$ will be defined later.
    Let us define $\varphi$ for $z\in K$ by
    \begin{align}
        \varphi(z)= \gamma |z|^{\gamma-1}
    \end{align}
    Note that $\|\varphi(z)\|_{L^\infty}\leq 1$.
    Let $\mathcal{A}: H^s(Q)\to H^s(Q)$ be the set-value mapping associating with $z\in H^s(Q)$ the set of solutions $y$ of the linear equation
    \begin{equation}
        \label{eq:burgers_linearized}
        \left\{
            \begin{array}{ll}
                y_t+\varphi(z) y_x-y_{xx}=w(t,x) \quad & \text{on }(0,T)\times (-1,1) , \\
                y(t,0)=0& \text{on }(0,T), \\
                y(t,1)=0& \text{on }(0,T), \\
                y(0,x)=\tilde y_0(x)&\text{on }(-1,1) ,
            \end{array}
        \right.
    \end{equation}
    with $w \in \mathcal{A}_0$ and satisfying $y(T)=0$.

    According to \cite{fernandezNullControllabilityBurgers2007,fernandezNullApproximateControllability2000}, there exists $C^*>0$ such that for any function $\varphi(z)$ with $\|\varphi(z)\|_{L^\infty}\leq 1$, there exists $w\in \mathcal{A}_0$ steering the solution of \cref{eq:burgers_linearized} to zero, i.e., $\mathcal{A}(z)$ is not empty.

    Let us now assume that $y_0$ satisfies the assumption of \cref{lem:local_exact} with $C^*$. We want to check that $\mathcal{A}$ satisfies the hypotheses of Kakutani's fixed point theorem (\cite[Chapter 2]{granasFixedPointTheory2003}).

    Using \cite[Chapter 3]{ladyzenskajaLinearQuasilinearEquations1968}, the solutions $y$ of \cref{eq:burgers_linearized} belong to 
    $$X=L^2(0,T;H^2(-1,1))\cap H^1(0,T;L^2(-1,1))$$
     and thus to $H^1(Q)$. Moreover, the maximum principle implies 
    \begin{align}
        \|y\|_{L^\infty(Q)}\leq \|y_0\|_{L^\infty(Q)}+T \|w\|_{L^\infty(Q)}\leq 1/ \gamma.
    \end{align}
    Hence, $\mathcal{A}$ maps $K$ into $K$ and, for any $z\in K$, $\mathcal{A}(z)$ is a non-empty convex compact subset of $H^s(Q)$.

    Let us now check that $\mathcal{A}$ is upper hemicontinuous on $K$.
    In pursuite of this goal, we set $\mu \in (H^s(Q))'$. We have to check that
    \begin{align}
        z \mapsto \sup_{y\in \mathcal{A}(z)} \langle \mu, y \rangle
    \end{align}
    is upper semi-continuous.
    
    Let $(z_n)_n\in K^\N$ be a converging sequence toward $z_\infty$ in $H^s(Q)$.
    Let us first observe that $\varphi : K \to L^2(Q)$ is continuous. Indeed, for $\gamma \geq 2$ we use the uniform boundedness of $K$
    \begin{align*}
        \|\varphi(z_a)-\varphi(z_b)\|_{L^2(Q)}\leq C(\gamma) \|z_a-z_b\|_{L^2(Q)}.
    \end{align*}
    On the other-hand, for $1<\gamma<2$, $x\mapsto x^{\gamma-1}$ is $\gamma-1$ H\"older continuous. As a consequence, it is also the case for $\varphi$:
    \begin{align*}
        |\varphi(z_a)-\varphi(z_b)\|_{L^2(Q)}\leq C(\gamma) \|(z_a-z_b)^{\gamma-1}\|_{L^2(Q)}\leq C(\gamma) (2T)^{\frac{2-\gamma}{2}}\|z_a-z_b\|_{L^2(Q)}^{\gamma-1}.
    \end{align*}
    Hence, $\varphi(z_n)\to \varphi(z_\infty)$ in $L^2(Q)$.

    By compactness of $\mathcal{A}(z_n)$, there exists $y_n \in \mathcal{A}(z_n)$ such that
    \begin{align}
        \sup_{y\in \mathcal{A}(z_n)} \langle \mu, y \rangle=\langle \mu, y_n \rangle.
    \end{align}
    Using energy estimates (\cite[Chapter 3]{ladyzenskajaLinearQuasilinearEquations1968}), $(y_n)_n$ is uniformly bounded in $X$. Thanks to Aubin-Lions Lemma \cite{aubinTheoremeCompacite1963}, up to a subsequence, there exists $y_{\infty}\in X$ such that
    \begin{align}
        y_{n,t} &\rightharpoonup y_{\infty,t} & \text{ weakly }L^2(Q),\\
        y_n &\to y_{\infty} & \left\{
        \begin{array}{r}
              \text{ weakly in }L^2(0,T;H^2(-1,1)),\\
         \text{ strongly in }L^2(0,T;H^1_0(-1,1)),\\
         \text{ in }\mathcal{C}^0(0,T;L^2(-1,1)).
        \end{array}
        \right.
    \end{align}
    As $\|\varphi(z_n)\|_{L^{\infty}(Q)}\leq 1$, up to subsequence, there exists $g\in L^\infty(Q)$ such that we have the weak star convergence 
    $\varphi(z_n) \overset{*}{\rightharpoonup} g$. By uniqueness of the limit in the sense of distribution, $g=\varphi(z_\infty)$. Thus, $\varphi(z_n)y_{n,x} \rightharpoonup \varphi(z_\infty) y_{\infty,x}$ in $L^2(Q)$.

    As a consequence, $y_\infty$ is solution of \cref{eq:burgers_linearized} with $\varphi(z_\infty)$ as coefficient. This shows that $y_\infty \in \mathcal{A}(z_\infty)$, i.e.,
    \begin{align}
        \limsup_{n\to \infty} \sup_{y\in \mathcal{A}(z_n)} \langle \mu, y \rangle \leq \sup_{y\in \mathcal{A}(z_\infty)} \langle \mu, y \rangle.
    \end{align}
    Hence, by Kakutani's fixed point theorem, there exists $\hat z\in K$ such that $\hat z \in \mathcal{A}(\hat z)$.

    Let us now take the trace of $\hat z$ on $(0,T)\times \{0\}$. As $\hat z \in X$, by \cite[Theorem 2.1]{MR0350178}, $\hat z(\cdot,0)\in H^{3/4}(0,T)$. This ensures that the restriction of $\hat z$ to $(0,T)\times(0,1)$ is an admissible trajectory of the system \eqref{eq:gen_burgers} and proves \cref{lem:local_exact}.
\end{proof}
\section{Conclusion and open problems}

\subsection{A few open problems}
\label{sec:ccl_and_open}
Let us now present some open problems in one space dimension related to this work.
\paragraph{} Does the small-time global null controllability hold for \cref{eq:gen_burgers} with $1<\gamma \leq 3/2$?
    Our entire control strategy is based on using the hyperbolicity of the evolution equation to dissipate the initial condition at the cost of a boundary layer. Then, for this non-small boundary layer to disappear, we use the fact that the moment $\int (1-x)|y|$ is small. A generalisation of this idea was used in \cite{coronSmalltimeGlobalExact2020} where the preparation of the dissipation of boundary layer plays a crucial role. In our case, the return method with $\vartheta$ ensures the dissipation in the case $\gamma > 3/2$ (the limiting step is \cref{sec:passive_stage}). In the linear case $\gamma=1$, there is no boundary layer. As a consequence, we believe that our method cannot be extended to $[1,3/2]$. Another approach would be to use some highly oscillating controls to ensure a better preparation of the boundary residue.

\paragraph{}    We can also ask if the small-time global null controllability holds for more general flux functions ($f(u)_x$ instead of $(|u|^\gamma)_x$). The extension of our proof to strictly convex viscous conservation laws should be possible. For more general functions, a precise study of the solutions of \cref{def:k_M} would be necessary.

\paragraph{} Another interesting direction would be to consider a dispersive model like the Korteweg–De Vries equation. With the help of two boundary controls and a uniform in space internal control, Chapouly proved in \cite{chapoulyGlobalControllabilityNonlinear2009} that the small-time global null controllability holds. To the best of the author knowledge, whether the small-time global null controllability holds without the use of the right Dirichlet boundary control (and possibly with the help of a right control on the derivative) remains an open question.

\subsection{Acknowledgement}
The author would like to thank Jean-Michel Coron for having attracted his attention to this control
problem and for extremely pertinent advice he provided.
\appendix
\section{Parabolic regularity estimates for the heat equation}
\label{ap:el_reg}
We recall here a well-known result on the regularity of the heat equation reproduced for example in \cite[Appendix 4.1]{leautaudUniformControllabilityScalar2012}.

Let $\mathcal{H}^m=D((-\Delta)^{m/2})$, $m\geq 0$, be the domain of the fractional Dirichlet Laplacian on $L^2(0,1)$, and $\mathcal{H}^{-m}$ the dual of $\mathcal{H}^m$ with pivot space $L^2(0,1)$. In particular $\mathcal{H}^1=H^1_0(0,1)$, $\mathcal{H}^0=L^2(0,1)$ and $\mathcal{H}^{-1}=H^{-1}(0,1)$.
Let $y$ be a classical solution of 

\begin{align}
\left\{
\begin{array}{ll}
     y_t-y_{xx}=f(t,x) & \text{on }(0,T)\times (0,1)  , \\
     y(t,0)=0 & \text{on }(0,T) , \\
     y(t,1)=0 & \text{on }(0,T) , \\
     y(0,\cdot)=y_0 & \text{on }(0,1) ,
\end{array}
\right.
\end{align}
with $m\in \R$, $u_0\in \mathcal{H}^m(0,1)$ and $f\in L^2(0,T;\mathcal{H}^{m-1}(0,1))$.
Then,
\begin{align}
y\in \mathscr{C}^0(0,T;\mathcal{H}^m(0,1))\cap L^2(0,T;\mathcal{H}^{m+1}(0,1))\cap H^1(0,T;\mathcal{H}^{m-1}(0,1)),
\end{align}
and, for $t\leq T$,
\begin{align}
\|y(t)\|_{\mathcal{H}^m}^2+\int_0^t \|y(s)\|^2_{\mathcal{H}^{m+1}}ds+\int_0^t \|y_t(s)\|^2_{\mathcal{H}^{m-1}}ds= \|y_0\|^2_{\mathcal{H}^m}+\int_0^t\|f(s)\|^2_{\mathcal{H}^{m-1}}ds.
\end{align}

\printbibliography
\end{document}